\theoremstyle{thmstyleone}
\newtheorem{theorem}{Theorem}
\newtheorem{proposition}[theorem]{Proposition}
\newtheorem{lemma}[theorem]{Lemma} 
\newtheorem{conjecture}[theorem]{Conjecture}
\theoremstyle{thmstyletwo}
\newtheorem{example}[theorem]{Example}
\newtheorem{remark}[theorem]{Remark}
\theoremstyle{thmstylethree}
\newtheorem{definition}[theorem]{Definition}
\numberwithin{equation}{section}
\numberwithin{theorem}{section}
\begin{document}

\title[Article Title]{Bounds and Limiting Minimizers for a Family of Interaction Energies}

\author*[1]{\fnm{Cameron} \sur{Davies}}\email{cameron.davies@mail.utoronto.ca}

\affil*[1]{\orgdiv{Department of Mathematics}, \orgname{University of Toronto}, \orgaddress{\street{40 St. George St.}, \city{Toronto}, \postcode{M5S 2E4}, \state{Ontario}, \country{Canada}}}

\abstract{We study a two parameter family of energy minimization problems for interaction energies $\mathcal{E}_{\alpha,\beta}$ with attractive-repulsive potential $W_{\alpha,\beta}$. We develop a concavity principle, which allows us to provide a lower bound on $\mathcal{E}_{\alpha,\beta}$ if there exist $\beta_0<\beta<\beta_1$ with minimizers of $\mathcal{E}_{\alpha,\beta_0}$ and $\mathcal{E}_{\alpha,\beta_1}$ known. 
In addition to this, we also derive new conclusions about the limiting behaviour of $\mathcal{E}_{\alpha,\beta}$ for $\beta\approx 2.$ Finally, we describe a method to show that, for certain values of $(\alpha,\beta),$ $\mathcal{E}_{\alpha,\beta}$ cannot be minimized by the uniform distribution over a top-dimensional regular unit simplex.  
Our results are made possible by two key factors -- recent progress in identifying minimizers of $\mathcal{E}_{\alpha,\beta}$ for a range of $\alpha$ and $\beta$, and an analysis of $\inf\mathcal{E}_{\alpha,\beta}$ as a function on parameter space.}

\keywords{Energy minimization, aggregation equation, attractive-repulsive power-law interaction, convex, concave interpolation, weak convergence of measures.}

\maketitle

\thanks{
\em \copyright 2023 by the author.  The author is grateful to Robert McCann, Ryan Matzke, Almut Burchard, Doug Hardin, Ihsan Topaloglu, and Rupert Frank for stimulating interactions.
 The author acknowledges partial support of his research by a Canada Graduate Scholarship - Doctoral Natural Sciences and Engineering Research Council of Canada. The author has no conflicts of interest to report. 
}

\section{Introduction}\label{sec-intro}
The purpose of this paper is to study minimizers of certain interaction energy functionals $\mathcal{E}_W:\mathcal{P}(\mathbb{R}^n)\to \mathbb{R}$ of the form 
\begin{equation}\label{eqn - general ie}\mathcal{E}_{W}[\mu]=\frac{1}{2}\iint_{\mathbb{R}^n\times\mathbb{R}^n}W(x-y)d\mu(x)d\mu(y),\end{equation}
with interaction kernel $W:\mathbb{R}^n\times\mathbb{R}^n\to \mathbb{R}$. More concretely, we study the two parameter family of potentials \begin{equation}\label{eqn - potential}W_{\alpha,\beta}(x-y)=\frac{|x-y|^\alpha}{\alpha}-\frac{|x-y|^\beta}{\beta},\end{equation} for $\alpha>\beta>-n,$ and for such potentials we will use the notation $\mathcal{E}_{\alpha,\beta}:=\mathcal{E}_{W_{\alpha,\beta}}.$ 

Energies of the form \eqref{eqn - general ie} are closely related to the dynamics of particles whose pairwise interactions are governed by $W.$ More specifically, it is well-known that the $d_2$-gradient flow of $\mathcal{E}_W$ is the aggregation equation 
\begin{equation}\label{eqn - aggregation}\frac{\partial \mu}{\partial t}=\nabla\cdot (\mu\nabla W * \mu)\end{equation} 
(see, for example, \cite[Section 11.2.1]{AGS} or \cite[Section 8.4.2] {Santambrogio}). Potential functions in the form \eqref{eqn - potential} are called \textit{attractive-repulsive power-law potentials,} and have the benefit of providing a relatively simple, yet still intriguing, model of particle dynamics where particles which are far apart attract each other, whereas particles which are close together repel each other. We will review the literature on the interaction energy \eqref{eqn - general ie} with potentials of the form \eqref{eqn - potential} later in the introduction. For the time being, we note that attractive-repulsive particle models have been an attractive choice across many disciplines, from collective animal behaviour, to granular media in physics, to game theory, which I describe in more detail elsewhere in \cite[Section 1.3]{MScThesis}.

We will focus on models of this behaviour which are based on isotropic power law potentials. However, we also remark on a few other similar problems which have received a lot of research interest. In particular, the case of anisotropic potentials has been particularly popular, and I refer the reader to the work of Carrillo and Shu in \cite{CSAnisotropic} for a review, and some interesting novel results. Another interesting model of attractive-repulsive dynamics arises in the Riesz kernel case, where the kernel is taken to be purely repulsive, but where our goal is to minimize the energy among probability measures supported on a specified compact set (often a sphere). We refer the reader to Borodachov, Hardin, and Saff's book \cite{BHS} for much of the theory of Riesz potentials. In particular, \cite[Chapter 2]{BHS} introduces the fundamentals of the theory. Finally, we should also mention that, in the case of attractive-repulsive potentials, we can restrict our focus to probability measures whose density is the indicator function of some set, leading to the minimizing sets problem studied by Carazzato, Pratelli, and Topaloglu in \cite{CPT}.

\subsection{Literature Review}
We now provide the reader with some background on the study of the aggregation equation with power-law potentials. A seminal contribution came from Balagu\'e, Carrillo, Laurent, and Raoul, who in \cite{BCLR} discovered a fundamental distinction between the cases $\beta>2$ and $\beta<2.$ Namely, in the `strongly repulsive' $\beta<2$ case, nearby particles repel each other strongly enough to prevent global minimizers of \eqref{eqn - general ie} from concentrating on sets of zero Hausdorff dimension, whereas in the `mildly repulsive' $\beta>2$ case, repulsion between nearby particles is so mild that it is easily overcome by attraction, leading minimizers of \eqref{eqn - general ie} to concentrate on sets of Hausdorff dimension $0$. It was later shown by Carrillo, Figalli, and Patacchini in \cite{CFP} that global minimizers for $\mathcal{E}_{\alpha,\beta}$ for $\alpha>\beta> 2$ must concentrate on subsets of $\mathbb{R}^n$ with finite cardinality. It should be noted that neither \cite{BCLR} nor \cite{CFP} address the case of the centripetal line $\beta=2$. Here, as we will discuss later in the introduction, the structure of global minimizers of \eqref{eqn - general ie} depends strongly on the value of $\alpha.$

In parallel to this study of the properties of abstract global minimizers, various researchers studied both the linear and non-linear stability of various steady states of the aggregation equation \eqref{eqn - aggregation}, work which would later inspire myself and others to explicitly identify global minimizers of \eqref{eqn - general ie} for certain values of $\alpha$ and $\beta$. 
Most relevant to this research, Sun, Uminsky, and Bertozzi investigated the stability of uniform distributions over spherical shells and found that, for regimes in parameter space where uniform distributions over spherical shells fail to be stable, the uniform distribution over the vertices of a top-dimensional unit simplex appeared to be an attractor for the dynamics in \cite{SUB}. Kolokolnikov and Von Brecht also joined this trio to investigate the stability of uniform distributions over `particle rings', or equally spaced points on circles in $\mathbb{R}^2$ in \cite{BKSUV}. Other notable contributions came from Albi, Balagu\'e, Carrillo, and Von Brecht, who studied particle rings in \cite{ABCV}, Balagu\'e, Carrillo, Laurent, and Raoul, who studied the radial stability of uniform distributions on spherical shells in \cite{BCLR2}, and Simione, who studied the nonlinear stability of simplex configurations in his thesis \cite{Simione}.

Spurred on by both research directions, myself and a number of other researchers began identifying explicit global minimizers. Since, for potentials of the form $W_{\alpha,\beta},$ the interaction energy \eqref{eqn - general ie} is invariant under translations and rotations of $\mu$, when we say that minimizers of $\mathcal{E}_{\alpha,\beta}$ are unique, we implicitly mean that they are unique up to translation and rotation. In general, the following three types of measures will be especially relevant to our work: 

\begin{definition}[Three Useful Measures]\label{def - measures}
    Let $n\ge 1$ and $r\ge 0.$ We define three useful classes of measures as follows:
    \begin{itemize}
        \item Let $\{x_0,\dots, x_n\}$ be points in $\mathbb{R}^n$ satisfying $|x_i-x_j|=\delta_{ij}$ (i.e. so that they form the vertices of a unit simplex) such that, for each $i\in \{0,\dots,n\},$ $|x_i|=\sqrt{\frac{n}{2n+2}}$, the last $n-i-1$ components of each $x_i$ are $0,$ and the $i+1$th component of each $x_i$ is positive. Then we call the measure $\mu_n^\Delta:=\frac{1}{n+1}\sum_{i=1}^n\delta_{x_i}$ the \textit{centred regular unit $n$-simplex measure}, or the \textit{unit simplex} for short.
        \item We define the \textit{cross-polytope measure of radius $r$} as $\mu^*_n(r):=\frac{1}{2n}\sum_{i=1}^n(\delta_{re_i}+\delta_{-re_i}).$
        \item We define the \textit{spherical shell measure of radius $r$} as the uniform distribution over $\partial B_r(0).$
    \end{itemize}
\end{definition}
\noindent The identification of global minimizers began with the work of Lim and McCann in \cite{LM}, who first showed that, for $\beta\ge 2,$ in the hard confinement limit $\alpha=\infty,$ the minimizers of the interaction $\mathcal{E}_{\infty,2}$ are uniformly distributed over the vertices of top-dimensional unit simplices (i.e. maximal collections of points at distance $1$ from each other). They then used $\Gamma$-convergence to show that the same result holds for $\mathcal{E}_{\alpha,\beta}$ with $\beta\ge 2$ and sufficiently large $\alpha$ in \cite[Theorem 1.3]{LM}. Building off of this work, and making use of the convexity properties of $\mathcal{E}_{4,2}$ discussed by Lopes in \cite{Lopes}, I joined Lim and McCann to explicitly identify and characterize minimizers of $\mathcal{E}_{\alpha,\beta}$ for a large portion of the mildly repulsive regime $\alpha>\beta>2$ and the centrifugal line $\alpha>\beta=2$ in \cite{DLMSimplex, DLMSphere}. In particular, because they will be relevant to the results of this paper, I summarize our contributions from \cite[Theorem 1.1]{DLMSimplex}, \cite[Theorem 1.5]{DLMSimplex}, \cite[Corollary 2.3]{DLMSphere}, and \cite[Theorem 2.4]{DLMSphere} in the following result:
\begin{theorem}[Minimizers in the Mildly Repulsive Regime and on the Centrifugal Line from \cite{DLMSimplex,DLMSphere}] Let $\alpha>\beta\ge 2.$ Then:
\begin{itemize}
    \item If $(\beta, n)=(2, 1)$ and $\alpha\ge 3,$ then \eqref{eqn - general ie} is uniquely minimized by the unit simplex. Moreover, there exists $\beta_1\ge 2$ such that $\mathcal{E}_{\alpha,\beta}$ is uniquely minimized by the unit simplex for any $\alpha>\beta\ge \beta_1$, and such that there exists a continuous, strictly decreasing threshold function $\alpha_{\Delta^1}:[2,\beta_n]\to[\beta_n,3]$ with the property that $\mathcal{E}_{\alpha,\beta}$ is uniquely minimized by the unit simplex for $\alpha>\alpha_{\Delta^1}(\beta),$ and not minimized by the unit simplex for $\alpha<\alpha_{\Delta^1}(\beta).$
    \item If $n\ge 2$ and $4>\alpha>\beta=2,$ then $\mathcal{E}_{\alpha,\beta}$ is minimized by a spherical shell of appropriate radius. 
    \item If $n\ge 2$ and $(\alpha, \beta)=(4,2),$ then $\mathcal{E}_{\alpha,\beta}$ is minimized by any probability measure supported on a sphere of radius $\sqrt{\frac{n}{2n+2}}$ which satisfies the second moment constraint $\int x\otimes x d\mu(x)=\frac{1}{2n+2}\operatorname{Id}.$
    \item If $n\ge 2,$ there exists $\beta_n\ge 2$ such that $\mathcal{E}_{\alpha,\beta}$ is uniquely minimized by the unit simplex for any $\alpha>\beta\ge \beta_n$, and such that there exists a continuous, strictly decreasing threshold function $\alpha_{\Delta^n}:[2,\beta_n]\to [\beta_n, 4]$ with the property that $\mathcal{E}_{\alpha,\beta}$ is uniquely minimized by the unit simplex for $\alpha>\alpha_{\Delta^n}(\beta),$ and not minimized by the unit simplex for $\alpha_{\Delta^n}(\beta).$
\end{itemize}
\end{theorem}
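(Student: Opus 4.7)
Since the theorem consolidates four results already proved in \cite{DLMSimplex, DLMSphere}, my plan would be to recover them along two parallel tracks. Bullets 2 and 3 concern the centrifugal line $\beta=2$, where the repulsive term is exactly quadratic; bullets 1 and 4 concern the mildly repulsive regime $\beta>2$, where the simplex emerges as minimizer from the hard confinement limit $\alpha=\infty$ studied by Lim and McCann in \cite{LM}.

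For the centrifugal line I would translate $\mu$ so that its first moment vanishes and use the identity
\begin{equation*}
\mathcal{E}_{\alpha,2}[\mu] \;=\; \frac{1}{2\alpha}\iint |x-y|^{\alpha}\, d\mu(x)\, d\mu(y) \;-\; \frac{1}{2}\int |x|^2\, d\mu(x)
\end{equation*}
to reduce minimization to a constrained attractive-only problem with prescribed second moment. For $2<\alpha<4$ the map $t \mapsto t^{\alpha/2}$ is concave on $[0,\infty)$, so a Jensen-type inequality should show that, among measures with a fixed second moment, the attractive term is minimized by uniform distributions on spheres; optimizing the radius gives bullet 2. For the critical case $\alpha=4$ in bullet 3 I would expand $|x-y|^4 = (|x|^2+|y|^2-2x\cdot y)^2$ so that the energy becomes a polynomial in the first moment vector and the full second-moment tensor $\int x \otimes x\, d\mu$; completing the square in this finite-dimensional expression pins down both the sphere of radius $\sqrt{n/(2n+2)}$ and the isotropic tensor constraint $\int x\otimes x\,d\mu=\tfrac{1}{2n+2}\operatorname{Id}$.

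For the mildly repulsive bullets I would start from the Lim--McCann identification of the unit simplex as the unique minimizer of $\mathcal{E}_{\infty,\beta}$ for $\beta\ge 2$ and propagate it via the $\Gamma$-convergence statement \cite[Theorem 1.3]{LM} to all sufficiently large $\alpha$, producing the thresholds $\beta_n$ and $\beta_1$. In one dimension, the convexity of $\mathcal{E}_{4,2}$ noted by Lopes \cite{Lopes} combined with a convex interpolation extends simplex minimality all the way down to $\alpha=3$ at $\beta=2$. To produce a sharp threshold function I would set $\alpha_{\Delta^n}(\beta):=\inf\{\alpha>\beta : \mathcal{E}_{\alpha,\beta}\text{ is uniquely minimized by the simplex}\}$; its continuity would follow from joint continuity of $\inf\mathcal{E}_{\alpha,\beta}$ in $(\alpha,\beta)$ (via dominated convergence after a uniform moment bound) together with a standard tightness argument upgrading a minimizing sequence to a limit minimizer.

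The principal obstacle will be showing that $\alpha_{\Delta^n}$ is \emph{strictly} decreasing: one must rule out a plateau where the simplex remains a minimizer on a vertical strip of parameter space. This requires a second-variation analysis at the simplex, producing an explicit deformation (such as a small perturbation of one vertex) whose energy cost changes sign precisely as $(\alpha,\beta)$ crosses the threshold, and then using the uniqueness already established on one side of the boundary to promote this to strict monotonicity. A second delicate point is establishing uniqueness, not merely minimality, in the spherical cases; this reduces to tracing the equality cases in the moment inequalities back to rigidity of the support of $\mu$ on a sphere and, at $\alpha=4$, on the prescribed tensor level set.
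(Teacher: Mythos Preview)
The paper does not prove this theorem at all: it is presented in the literature review as a summary of results already established in \cite{DLMSimplex,DLMSphere}, with explicit pointers to \cite[Theorem 1.1]{DLMSimplex}, \cite[Theorem 1.5]{DLMSimplex}, \cite[Corollary 2.3]{DLMSphere}, and \cite[Theorem 2.4]{DLMSphere}. There is no argument in the present paper to compare your sketch against; the theorem is simply quoted.

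That said, as an outline of how the cited papers proceed, your sketch is broadly on target for bullets 2 and 3 (the moment-reduction on the centrifugal line and the explicit $|x-y|^4$ expansion are indeed the mechanisms in \cite{DLMSphere} and \cite{DLMSimplex}), and the $\Gamma$-convergence starting point from \cite{LM} is correct for bullets 1 and 4. Two points of divergence are worth noting. First, the one-dimensional endpoint $\alpha=3$ at $\beta=2$ is not obtained in \cite{DLMSimplex,DLMSphere} via Lopes convexity and interpolation as you suggest; it comes from a direct analysis specific to $n=1$ (and the range $2<\alpha<3$ is handled separately by Frank \cite{Frank}). Second, the strict monotonicity of $\alpha_{\Delta^n}$ in \cite[Theorem 1.5]{DLMSimplex} is not obtained by a second-variation or vertex-perturbation argument, but rather through a monotonicity argument in parameter space combined with the Euler--Lagrange characterization of minimizers; your proposed perturbation route would be a genuinely different approach and would need a separate verification that the sign change actually characterizes the threshold rather than merely bounding it.
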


A substantial amount of work has also been undertaken to identify energy minimizers in the strongly repulsive regime, and along the part of the centrifugal line which was not addressed by myself, Lim, and McCann. This work began with Carrillo and Shu, who in \cite[Theorem 5.1]{CS} and \cite[Remark 5.8]{CS} used convexity to show that suitable densities on balls globally minimize $\mathcal{E}_{2,\beta}$ and $\mathcal{E}_{4,\beta}$ for certain values of $\beta<2.$ In addition to this, Frank completed the classification of minimizers on the centripetal line in \cite{Frank}, where he used an elegant argument based  on Fourier analysis to show that, for $n=1$ and $3>\alpha>2,$ $\mathcal{E}_{\alpha,2}$ is uniquely minimized by a certain density on an interval. Most recently, Frank and Matzke used convexity arguments and a careful analysis of hypergeometric functions to show that, for a wide range of parameters $\alpha\in [2,4]$ and for certain $\beta\le 2,$ minimizers of $\mathcal{E}_{\alpha,\beta}$ are spherical shells of an appropriate radius depending on $\alpha$ and $\beta$. We provide a more precise summary of these results (\cite[Theorem 5.1]{CS}, \cite[Remark 5.8]{CS}, \cite[Theorem 1]{Frank}, and \cite[Theorem 1]{FM}) below:
\begin{theorem}[Minimizers in the Strongly Repulsive Regime and on the Centrifugal line from \cite{CS,Frank,FM}] The following are true:
    \begin{itemize}
        \item Let $\alpha=2$ and $2-n< \beta<\min(4-n,2),$ or $\alpha=4$ and $2-n<\beta<\frac{2+2d-d^2}{d+1}.$ Then $\mathcal{E}_{\alpha,\beta}$ is uniquely minimized by a density on a ball of appropriate radius, where the density has formula given by \cite[Equation (5.9)]{CS} if $\alpha=2$, or \cite[Equation (5.10)]{CS} if $\alpha=4.$
        \item Let $(\beta,n)=(2,1),$ and let $\alpha\in (2,3).$ Then $\mathcal{E}_{\alpha,2}$ is uniquely minimized by a density on an interval, which are both explicitly stated in \cite[Theorem 1]{Frank}.
        \item Let $d\ge 2,$ $2\le\alpha\le 4,$ and $\frac{-10+3\alpha+7d-\alpha d-d^2}{d+\alpha-3}\le \beta\le 2,$ and assume that $(\alpha,\beta)\ne (4,2).$ Then $\mathcal{E}_{\alpha,\beta}$ is uniquely minimized by the uniform distribution over the sphere of radius $$R_{\alpha,\beta}=\left(\frac{\Gamma(\frac{d+\beta-1}{2})\Gamma(\frac{2d+\alpha-2}{2})}{\Gamma(\frac{d+\alpha-1}{2})\frac{2d+\beta-2}{2}}\right)^{\frac{1}{\alpha-\beta}}.$$
    \end{itemize}
\end{theorem}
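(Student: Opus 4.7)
The plan is to appeal to the three cited references, since the statement is a compendium of results from \cite{CS,Frank,FM}; below I sketch how I would reconstruct each bullet.

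For the first bullet (Carrillo and Shu), I would exploit the fact that on the centripetal line $\alpha=2$ the attractive term reduces, after centring the barycentre, to (a rescaling of) the variance of $\mu$, so the energy becomes variance minus a $\beta$-th moment integral. The plan is then to combine displacement convexity of the variance with strict concavity of $-|x-y|^\beta/\beta$ in the stated range to force minimizers to be radially symmetric and non-increasing, write down the Euler--Lagrange equation for the radial profile, and solve the resulting integral equation explicitly to recover \cite[(5.9)]{CS}. The case $\alpha=4$ proceeds analogously but the attractive term now contributes fourth moments, and the tensorial decomposition that handles these is the main bookkeeping obstacle.

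For Frank's result at $(\beta,n)=(2,1)$ with $\alpha\in(2,3)$, the same centring reduction makes the $\beta=2$ contribution depend only on the first two moments, so the task is to minimize $\iint|x-y|^\alpha\,d\mu(x)\,d\mu(y)$ under a variance constraint. I would follow Frank's Fourier route: represent $|x|^\alpha$ through its (distributional) Fourier transform, rewrite the energy as an integral of $|\hat\mu(\xi)|^2$ against a kernel of definite sign away from the origin, and conclude by a pointwise Fourier inequality calibrated against the candidate density on an interval. The hard part is precisely verifying this pointwise inequality, which rests on a nontrivial identity established in \cite{Frank}.

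For the Frank--Matzke result, I would first fix the radius $R$ and consider only measures supported on $\partial B_R(0)$: expanding $W_{\alpha,\beta}(x-y)$ in Gegenbauer polynomials of $\langle x,y\rangle/R^2$ converts the energy into a series whose nonconstant coefficients, in the stated parameter range, all have one sign, so positive-definiteness forces the uniform distribution. Optimizing over $R$ by differentiating in $R$ then recovers $R_{\alpha,\beta}$. The hard part will be the sign analysis of the hypergeometric coefficients throughout the two-parameter region, which is exactly the analytic heart of \cite{FM} and where I would not expect to improve on their delicate treatment.
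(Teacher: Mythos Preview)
The paper does not supply a proof of this theorem: it appears in the literature review (Section~\ref{sec-intro}) purely as a summary of results from \cite{CS,Frank,FM}, with no argument beyond the citations themselves. Your opening sentence---that the plan is to appeal to the three cited references---is therefore exactly the paper's ``proof,'' and nothing more is required.

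Your sketches go well beyond what the paper offers, and they are broadly faithful to the cited works: the paper itself describes Frank's argument as ``based on Fourier analysis'' and the Frank--Matzke argument as resting on ``convexity arguments and a careful analysis of hypergeometric functions,'' both of which match your outlines. One minor caution on the Frank--Matzke sketch: the argument in \cite{FM} is not only about measures supported on a fixed sphere---one must also rule out competitors that are not spherically supported, which is where the convexity (linear-interpolation positivity) enters, rather than just the Gegenbauer sign analysis on the sphere. But since the paper neither attempts nor needs any of this, the point is moot for the comparison you were asked to make.
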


\subsection{Contributions of this Paper}
We now explain the key contributions of the present work. We note that, at a fundamental level, most of the new results of this paper stem from past work on identifying global minimizers of $\mathcal{E}_{\alpha,\beta}$ for various values of $(\alpha,\beta),$ along with a careful study of the minimum value of $\mathcal{E}_{\alpha,\beta}$ as a function on the space of parameters $\alpha>\beta>-n$. More precisely, we define the following class of functions:
\begin{definition}[Minimal Energy on a Set of Probability Measures]
    Let $\mathcal{F}\subseteq \mathcal{P}(\mathbb{R}^n)$ be a set of probability measures. We define the \textit{minimal energy of $\mathcal{E}_{\alpha,\beta}$ on $\mathcal{F}$} by $$E_\alpha^{\mathcal{F}}(\beta):=\inf_{\mu\in\mathcal{F}}\mathcal{E}_{\alpha,\beta}[\mu].$$
    In the case where $\mathcal{F}=\mathcal{P}(\mathbb{R}^n),$ we denote $E_\alpha(\beta):=E^{\mathcal{P}(\mathcal{R}^n)}_\alpha(\beta).$
\end{definition}
\noindent 

We dedicate Section \ref{sec-convexity} to studying the minimal energy as a function of $(\alpha,\beta),$ and exploring the lower bounds on the energy which we can derive from this perspective. In particular, for $\alpha>0$, this function is concave in $\beta$ and, if we minimize over all of $\mathcal{P}(\mathbb{R}^n),$ then the minimum is also non-decreasing in $\beta:$
\begin{proposition}[Properties of the Minimal Energy]\label{prop - concave increasing}
    Let $\alpha>0$ be fixed, let $\beta\in (0,\alpha),$ and let $\mathcal{F}\subseteq \mathcal{P}(\mathbb{R}^n)$ be a non-empty family of probability measures. Then $E^\mathcal{F}_\alpha(\beta)\in \mathbb{R}$ for every $\beta\in (0,\alpha]$ and, in addition, $E_\alpha^\mathcal{F}:(0,\alpha]\to \mathbb{R}$ is a continuous, concave function. Moreover, $E_\alpha$ is non-decreasing on $(0,\alpha].$
\end{proposition}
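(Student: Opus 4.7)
The plan is to establish the four claims in order by (i) using a uniform pointwise lower bound on the potential for finiteness, (ii) deducing concavity from a pointwise-in-$\mu$ second-derivative computation, (iii) handling continuity at the endpoint $\beta=\alpha$ via a sandwich argument, and (iv) observing that a concave function with maximum at its right endpoint is non-decreasing. For finiteness, I would note that, for $\alpha > \beta > 0$, the scalar function $r \mapsto W_{\alpha,\beta}(r) = r^\alpha/\alpha - r^\beta/\beta$ is uniquely minimized on $[0,\infty)$ at $r = 1$, yielding the uniform bound $W_{\alpha,\beta}(r) \ge \frac{1}{\alpha} - \frac{1}{\beta}$, so $\mathcal{E}_{\alpha,\beta}[\mu] \ge \tfrac12\bigl(\tfrac{1}{\alpha} - \tfrac{1}{\beta}\bigr) > -\infty$ and hence $E_\alpha^{\mathcal{F}}(\beta) > -\infty$. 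The matching upper bound requires some $\mu_0 \in \mathcal{F}$ with finite $\alpha$-moment, after which the elementary estimate $|x-y|^\beta \le 1 + |x-y|^\alpha$ (valid for $0 < \beta \le \alpha$) ensures $\mathcal{E}_{\alpha,\beta}[\mu_0] < \infty$ throughout $(0,\alpha]$. At $\beta = \alpha$ the potential vanishes identically, so $E_\alpha^{\mathcal{F}}(\alpha) = 0$.

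For concavity, the key observation is that, for each fixed $r \ge 0$, the function $f_r(\beta) := r^\beta/\beta$ is convex on $(0,\infty)$: a direct computation for $r > 0$ yields
\begin{equation*}
f_r''(\beta) = \frac{r^\beta}{\beta^3}\bigl((\beta \log r - 1)^2 + 1\bigr) > 0,
\end{equation*}
while $f_0 \equiv 0$. Hence, for each $\mu \in \mathcal{F}$, the map $\beta \mapsto \mathcal{E}_{\alpha,\beta}[\mu]$ is the sum of a constant in $\beta$, namely $\tfrac{1}{2\alpha}\iint |x-y|^\alpha\, d\mu(x)\, d\mu(y)$, and the negative of an integral of the convex functions $\tfrac{1}{2} f_{|x-y|}(\beta)$ against the positive measure $\mu \otimes \mu$; therefore it is concave in $\beta$ on $(0,\alpha]$. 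Since the pointwise infimum of a family of concave functions is concave, $E_\alpha^{\mathcal{F}}$ is concave on $(0,\alpha]$.

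Continuity of $E_\alpha^{\mathcal{F}}$ on the open interval $(0,\alpha)$ is then immediate from concavity and finiteness. The \emph{main obstacle}, and the step I expect to require the most care, is continuity at the right endpoint $\beta = \alpha$, since concavity alone permits a downward jump there. I would sandwich the limit from both sides. The chord inequality gives $E_\alpha^{\mathcal{F}}(\beta) \ge \tfrac{\alpha - \beta}{\alpha - \beta_0}\, E_\alpha^{\mathcal{F}}(\beta_0)$ for any fixed $\beta_0 \in (0,\alpha)$, so $\liminf_{\beta \to \alpha^-} E_\alpha^{\mathcal{F}}(\beta) \ge 0 = E_\alpha^{\mathcal{F}}(\alpha)$. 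Fixing the same $\mu_0$ as above, the bound $|x-y|^\beta \le 1 + |x-y|^\alpha$ provides a uniform integrable dominant for $\beta$ in a neighbourhood of $\alpha$, so the dominated convergence theorem yields $\mathcal{E}_{\alpha,\beta}[\mu_0] \to 0$ as $\beta \to \alpha^-$, and hence $\limsup_{\beta \to \alpha^-} E_\alpha^{\mathcal{F}}(\beta) \le 0$.

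Finally, for the monotonicity of $E_\alpha$ on $(0,\alpha]$, I would note that $\delta_0 \in \mathcal{P}(\mathbb{R}^n)$ satisfies $\mathcal{E}_{\alpha,\beta}[\delta_0] = 0$, so $E_\alpha(\beta) \le 0 = E_\alpha(\alpha)$ throughout $(0,\alpha]$. A concave function whose maximum occurs at its right endpoint is automatically non-decreasing: given $\beta_1 < \beta_2 \le \alpha$, writing $\beta_2 = (1-s)\beta_1 + s\alpha$ for some $s \in (0,1]$ and applying concavity together with $E_\alpha(\alpha) \ge E_\alpha(\beta_1)$ gives $E_\alpha(\beta_2) \ge (1-s)E_\alpha(\beta_1) + s E_\alpha(\alpha) \ge E_\alpha(\beta_1)$.
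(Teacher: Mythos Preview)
Your argument is correct, and your treatment of concavity coincides with the paper's (both compute the second $\beta$-derivative of $-t^\beta/\beta$ and find it strictly negative for $t>0$). The genuine difference is in the monotonicity step. The paper invokes the diameter support bound $\operatorname{diam}(\operatorname{spt}\mu)\le e^{1/\beta}$ for minimizers from \cite{DLMSphere}, restricts attention to measures satisfying this bound, and checks that $\beta\mapsto -t^\beta/\beta$ is non-decreasing for $t\in[0,e^{1/\beta}]$, which gives a pointwise-in-$\mu$ comparison $\mathcal{E}_{\alpha,\beta}[\mu]\le\mathcal{E}_{\alpha,\beta'}[\mu]$. Your route---observe $E_\alpha(\beta)\le\mathcal{E}_{\alpha,\beta}[\delta_0]=0=E_\alpha(\alpha)$ and then argue that a concave function maximized at its right endpoint is non-decreasing---is more elementary and needs no external input about minimizers; the paper's approach, on the other hand, yields a slightly stronger pointwise inequality as a by-product. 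You are also more careful at the endpoint $\beta=\alpha$: the paper cites only interior continuity of concave functions, whereas your sandwich argument explicitly closes that gap. Finally, the caveat you flag---that the upper bound $E_\alpha^{\mathcal{F}}(\beta)<\infty$ needs some $\mu_0\in\mathcal{F}$ with finite $\alpha$-moment---is an implicit assumption the paper's proof likewise does not address.
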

In fact, if we fix $0<\beta_0<\beta_1<\alpha,$ then we can say that, for $\beta\in [\beta_0,\beta_1],$ $E_\alpha$ is strongly concave on $[\beta_0,\beta_1]$, according to the following definition which can, for example, be found in \cite[Section 2.1.3]{Nesterov}:
\begin{definition}[Strong Concavity]\label{def - strong concave}
    We say that $f:\mathbb{R}\to\mathbb{R}$ is strongly concave on $[a,b]$ with concavity parameter $m$ if, for all $x_0,x_1\in [a,b],$ 
    $$f((1-t)x_0+tx_1)\ge (1-t)f(x_0)+tf(x_1)+t(1-t)\frac{m}{2}(x_0-x_1)^2.$$
    If $f$ is $C^2,$ then it is strongly concave on $[a,b]$ if and only if $f''(x)\ge m$ for all $x\in [a,b].$
\end{definition}
If, moreover, we know the minimum value of $\mathcal{E}_{\alpha,\beta_0}$ and $\mathcal{E}_{\alpha,\beta_1},$ then we can deduce the following lower bound on $\mathcal{E}_{\alpha,\beta}$ for $\beta\in (\beta_0,\beta_1)$ by concave interpolation:
\begin{proposition}[Strong Concave Interpolation Bound on $\mathcal{E}_{\alpha,\beta}$]\label{prop - sc bound}
    Let $0<\beta_0<\beta_1\le  \alpha.$ Then $E_{\alpha}$ is strongly concave on $[\beta_0,\beta_1],$ with concavity parameter $\frac{1}{2\beta_1^2}\frac{n}{n+1}\left[\frac{1}{\alpha}-\frac{1}{\beta_1}\right].$ In particular, if $\beta_t=(1-t)\beta_0+t\beta_1,$ then 
    \begin{equation}\label{eqn - strong convexity estimate}E_\alpha(\beta_t)\ge (1-t)E_\alpha(\beta_0)+tE_\alpha(\beta_1)+\frac{t(1-t)}{4\beta_1^2}\frac{n}{n+1}\left[\frac{1}{\alpha}-\frac{1}{\beta_1}\right](\beta_1-\beta_0)^2.\end{equation}
\end{proposition}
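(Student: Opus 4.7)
The plan is to control $E_\alpha''$ pointwise almost everywhere on $[\beta_0,\beta_1]$ by an envelope argument comparing $E_\alpha$ to the smooth function $g_{\mu_\beta^*}(\beta'):=\mathcal{E}_{\alpha,\beta'}[\mu_\beta^*]$ arising from a minimizer $\mu_\beta^*$ of $\mathcal{E}_{\alpha,\beta}$, then integrate to obtain the strong-concavity Jensen gap.

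For any $\mu\in\mathcal{P}(\mathbb{R}^n)$, write $g_\mu(\beta)=\frac{M_\alpha(\mu)}{\alpha}-\frac{M_\beta(\mu)}{\beta}$ with $M_\gamma(\mu):=\frac{1}{2}\iint|x-y|^\gamma d\mu(x)d\mu(y)$. Differentiating twice under the integral sign (justified because $\iint|x-y|^\beta(\log|x-y|)^k d\mu d\mu$ is locally bounded in $\beta$ for compactly supported $\mu$) yields
\[
-g_\mu''(\beta)=\frac{1}{2\beta}\iint|x-y|^\beta\Bigl[\bigl(\log|x-y|-\tfrac{1}{\beta}\bigr)^2+\tfrac{1}{\beta^2}\Bigr]d\mu(x)d\mu(y)\;\ge\;\frac{M_\beta(\mu)}{\beta^3},
\]
which in particular re-derives concavity of each $g_\mu$. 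To turn this into something useful, I use the unit simplex $\mu_n^\Delta$ as a test measure: all nonzero pairwise distances equal $1$, so $M_\gamma(\mu_n^\Delta)=\frac{n}{2(n+1)}$ for every $\gamma$ and the competitor bound reads $E_\alpha(\beta)\le \frac{n}{2(n+1)}\bigl(\frac{1}{\alpha}-\frac{1}{\beta}\bigr)$; applied to a minimizer $\mu_\beta^*$ and dropping the nonnegative $\frac{M_\alpha(\mu_\beta^*)}{\alpha}$ term, this forces $M_\beta(\mu_\beta^*)\ge \frac{n}{2(n+1)}\bigl(1-\frac{\beta}{\alpha}\bigr)$. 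Existence of a minimizer for every $\beta\in(0,\alpha)$ follows from the standard direct method applied to the coercive, weakly lower semicontinuous energy $\mathcal{E}_{\alpha,\beta}$.

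For the envelope step, observe that $E_\alpha \le g_{\mu_\beta^*}$ pointwise with equality at $\beta$, so the nonnegative function $g_{\mu_\beta^*}-E_\alpha$ attains its minimum at $\beta$. At almost every $\beta\in(\beta_0,\beta_1)$ the concave function $E_\alpha$ admits an Alexandrov second derivative, and comparing the Taylor expansion of the smooth $g_{\mu_\beta^*}$ with the Alexandrov expansion of $E_\alpha$ at such a $\beta$ gives $E_\alpha''(\beta)\le g_{\mu_\beta^*}''(\beta)$. Combining with the previous two estimates and the monotonicity of $\tfrac{1}{\beta^2}\bigl(\tfrac{1}{\beta}-\tfrac{1}{\alpha}\bigr)$ on $(0,\alpha]$,
\[
-E_\alpha''(\beta)\;\ge\;\frac{M_\beta(\mu_\beta^*)}{\beta^3}\;\ge\;\frac{n}{2(n+1)\beta^2}\Bigl(\frac{1}{\beta}-\frac{1}{\alpha}\Bigr)\;\ge\;\frac{n}{2(n+1)\beta_1^2}\Bigl(\frac{1}{\beta_1}-\frac{1}{\alpha}\Bigr)
\]
for a.e.\ $\beta\in[\beta_0,\beta_1]$. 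Integrating this pointwise bound against the Green's function $G(\beta_t,\cdot)$ of $-d^2/d\beta^2$ on $[\beta_0,\beta_1]$ and using the classical identity $\int_{\beta_0}^{\beta_1}G(\beta_t,s)ds=\tfrac{t(1-t)(\beta_1-\beta_0)^2}{2}$ delivers \eqref{eqn - strong convexity estimate}.

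The main obstacle is rigorously justifying the envelope second-derivative comparison: the concave $E_\alpha$ is only twice differentiable almost everywhere, so a careful Alexandrov-theorem argument, or alternatively a finite second-difference approach using $\epsilon$-minimizers and passing to the limit, is needed to upgrade the pointwise equality $E_\alpha(\beta)=g_{\mu_\beta^*}(\beta)$ into the desired second-derivative inequality.
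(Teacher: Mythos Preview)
Your argument is correct and reaches the same pointwise second-derivative estimate as the paper, but the two proofs diverge in how that estimate is transferred to $E_\alpha$. The paper never differentiates $E_\alpha$ directly. It instead fixes the family $\mathcal{F}$ of compactly supported measures whose energy does not exceed the simplex value $\tfrac{n}{2(n+1)}\bigl(\tfrac{1}{\alpha}-\tfrac{1}{\beta_1}\bigr)$, combines the differential inequality $g_\mu''(\beta)\le \tfrac{1}{\beta^2}g_\mu(\beta)$ with membership in $\mathcal{F}$ to make each $g_\mu$ strongly concave on $[\beta_0,\beta_1]$ with the stated parameter, checks that $E_\alpha^{\mathcal{F}}=E_\alpha$ on this interval, and finally invokes a preliminary lemma asserting that a pointwise infimum of uniformly strongly concave functions inherits the same concavity parameter. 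Your envelope route trades the construction of $\mathcal{F}$ and the infimum lemma for a direct comparison of $E_\alpha$ with the touching function $g_{\mu_\beta^*}$; the paper's route has the advantage of staying entirely within the smooth category, so differentiability of $E_\alpha$ never arises.

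The ``main obstacle'' you flag is milder than you suggest and does not really need Alexandrov's theorem. Since $E_\alpha(\beta\pm h)\le g_{\mu_\beta^*}(\beta\pm h)$ with equality at $h=0$, the centred second difference of $E_\alpha$ is dominated by that of $g_{\mu_\beta^*}$ at \emph{every} $\beta$; dividing by $h^2$ and letting $h\to 0$ bounds the upper second symmetric derivative of $E_\alpha$ by $g_{\mu_\beta^*}''(\beta)\le -m$ everywhere, and a classical Schwarz-type lemma then gives concavity of $\beta\mapsto E_\alpha(\beta)+\tfrac{m}{2}\beta^2$, which is exactly the strong-concavity inequality. One small omission: smoothness of $g_{\mu_\beta^*}$ requires $\mu_\beta^*$ to be compactly supported, which follows from the diameter bound on minimizers that the paper cites elsewhere.
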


In Section \ref{sec-transition}, we then turn our attention to analyzing the minimizers of $\mathcal{E}_{\alpha,\beta}$ for $\beta\approx 2.$ When doing so, we recall that Carrillo, Figalli, and Patacchini showed in \cite{CFP} that, for $\beta>2,$ all minimizers of $\mathcal{E}_{\alpha,\beta}$ consist of finite sums of $\delta$-masses. With this in mind, we introduce the following notation for spaces of discrete measures with at most $k$ points in their support:
\begin{definition}[Discrete Measures]
    Given $k\in\mathbb{N}$ and $\Omega\subseteq\mathbb{R},$ we define the set of (at most) \textit{$k$-point measures} $\mathcal{D}_k(\Omega)\subseteq \mathcal{P}(\Omega)$ by
    $$\mathcal{D}_k(\Omega):=\{\mu\in\mathcal{P}(\Omega)\ |\ |\operatorname{spt}(\mu)|\le k\},$$
    where $|A|$ denotes the cardinality of $A.$
\end{definition}
By using Proposition \ref{prop - concave increasing} and considering these sets of discrete measures, we are able to show the following result, which states that, if $n=1$ and $\alpha\in (2,3),$ or if $n\ge 2$ and $\alpha\in (2,4),$ the number of points in the support of minimizers of $\mathcal{E}_{\alpha,\beta}$ necessarily tends to $\infty$ as $\beta \searrow 2:$
\begin{theorem}[Cardinality of Energy Minimizers for $\beta\approx 2$]\label{thm - card limit}
    For $n=1,$ fix $\alpha\in (2,3).$ For $n\ge 2,$ fix $\alpha\in (2,4).$ Then $$\lim_{\beta\to 2^+}\min\{|\operatorname{spt}(\mu)|\ |\ \mu\in\operatorname{argmin} \mathcal{E}_{\alpha,\beta}[\cdot]\}=+\infty.$$
\end{theorem}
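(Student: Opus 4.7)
The plan is to argue by contradiction, exploiting Proposition \ref{prop - concave increasing} not only for the full family $\mathcal{P}(\mathbb{R}^n)$ but for each of the restricted families $\mathcal{D}_K(\mathbb{R}^n)$. Suppose the conclusion fails, so that one can find $K\in\mathbb{N}$ together with a sequence $\beta_j\searrow 2$ and minimizers $\mu_j$ of $\mathcal{E}_{\alpha,\beta_j}$ satisfying $|\operatorname{spt}(\mu_j)|\le K$ for every $j$. Set $f_K(\beta):=E_\alpha^{\mathcal{D}_K(\mathbb{R}^n)}(\beta)$. Since $\mathcal{D}_K(\mathbb{R}^n)$ is non-empty, Proposition \ref{prop - concave increasing} makes both $f_K$ and $E_\alpha$ real-valued and concave on $(0,\alpha]$, hence continuous at the interior point $\beta=2$. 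Moreover $f_K\ge E_\alpha$ pointwise, because $\mathcal{D}_K(\mathbb{R}^n)\subseteq\mathcal{P}(\mathbb{R}^n)$.

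Next I would combine the membership $\mu_j\in\mathcal{D}_K(\mathbb{R}^n)$ with the fact that $\mu_j$ is a global minimizer to obtain the sandwich
\[
f_K(\beta_j)\le \mathcal{E}_{\alpha,\beta_j}[\mu_j]=E_\alpha(\beta_j)\le f_K(\beta_j),
\]
so that $f_K(\beta_j)=E_\alpha(\beta_j)$ for each $j$. Letting $j\to\infty$ and using continuity produces the equality $f_K(2)=E_\alpha(2)$. The contradiction then comes from the second theorem of the introduction: for $n=1$ and $\alpha\in(2,3)$, Frank's result forces $\mathcal{E}_{\alpha,2}$ to be uniquely (up to translation) minimized by an absolutely continuous density on an interval, while for $n\ge 2$ and $\alpha\in(2,4)$ the Frank--Matzke result (which applies because we exclude $\alpha=4$) gives uniqueness (up to rigid motion) of the uniform distribution on an explicit sphere as the minimizer. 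In either case the unique minimizer is not a finite sum of $\delta$-masses, so the uniqueness clause forces the strict inequality $f_K(2)>E_\alpha(2)$, contradicting the equality just derived.

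The main obstacle I anticipate lies not in any single step but in the dependence of the final strict inequality on \emph{uniqueness} of the $\beta=2$ minimizer: if only existence of a non-discrete minimizer were available, a discrete minimizer could in principle coexist, and $f_K(2)>E_\alpha(2)$ could fail. This explains why the hypotheses restrict $\alpha$ to $(2,3)$ for $n=1$ and to $(2,4)$ for $n\ge 2$, since outside these ranges---most conspicuously at $(\alpha,\beta)=(4,2)$ with $n\ge 2$, where the same theorem exhibits a large family of minimizers on a common sphere---the argument would break down. A minor but worth-flagging sanity check is that Proposition \ref{prop - concave increasing} really does apply to $\mathcal{D}_K(\mathbb{R}^n)$ as written: the family is non-empty, which is the only hypothesis invoked, and finiteness of $f_K$ on $(0,\alpha]$ is part of the proposition's conclusion.
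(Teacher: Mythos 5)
Your setup matches the paper's: argue by contradiction, use the sandwich $f_K(\beta_j)=E_\alpha(\beta_j)$, and invoke the continuity from Proposition \ref{prop - concave increasing} to get $f_K(2)=E_\alpha(2)$. The gap is in your final step. From ``the minimizer of $\mathcal{E}_{\alpha,2}$ is not a finite sum of $\delta$-masses'' you conclude the strict inequality $f_K(2)>E_\alpha(2)$, but that inference is not valid as stated: $f_K(2)=\inf_{\mu\in\mathcal{D}_K}\mathcal{E}_{\alpha,2}[\mu]$ is an infimum that need not be attained, so a priori measures with at most $K$ atoms could have energies approaching $E_\alpha(2)$ even though none of them is a minimizer, in which case $f_K(2)=E_\alpha(2)$ and no contradiction arises. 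Turning ``no discrete minimizer exists'' into ``the infimum over $\mathcal{D}_K$ is strictly larger'' is exactly the nontrivial attainment step, and it is what the paper supplies: using the diameter bound of \cite[Proposition 2.1]{DLMSphere} and translation invariance to confine a minimizing sequence to $\mathcal{D}_K(\overline{B_e(0)})$, proving that this set is weakly compact (Lemma \ref{lem-weak compact}, i.e.\ weak closedness of the at-most-$K$-atom constraint), and using weak continuity of $\mathcal{E}_{\alpha,2}$ to extract an actual minimizer $\mu_\infty\in\mathcal{D}_K$, which then contradicts the classification of minimizers at $\beta=2$. Your proof omits this entire mechanism, so as written it does not close.

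A secondary, smaller point: what the contradiction really requires is not \emph{uniqueness} of the $\beta=2$ minimizer but the statement that \emph{no} minimizer of $\mathcal{E}_{\alpha,2}$ has finite support (for $n\ge 2$, $\alpha\in(2,4)$ all minimizers are spherical shells; for $n=1$, $\alpha\in(2,3)$ the minimizer is an absolutely continuous density). Your remark about $(\alpha,\beta)=(4,2)$ is apt, but the obstruction there is that some members of the minimizing family (e.g.\ suitable discrete measures on the sphere satisfying the second-moment constraint) have finite support, not merely that uniqueness fails.
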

We are also able to use these properties and techniques to say that, in a certain sense, minimizers of $\mathcal{E}_{\alpha,\beta}$ tend to minimizers of $\mathcal{E}_{\alpha,2}$ in Theorem \ref{thm - weak shells}, Proposition \ref{prop - weak 1d}, and Theorem \ref{thm - weak simplices}.

Finally, in Section \ref{sec-threshold}, I continue the work which Lim, McCann, and I started in \cite{DLMSimplex} on identifying lower bounds for the threshold function $\alpha_{\Delta^n},$ which is defined to be the value such that, for $\alpha<\alpha_{\Delta^n}(\beta),$ unit simplices do not minimize $\mathcal{E}_{\alpha,\beta}$ and for $\alpha>\alpha_{\Delta^n}(\beta),$ unit simplices uniquely minimize $\mathcal{E}_{\alpha,\beta}.$ In particular, by comparing the energy of the unit simplex to the energy of the optimal cross-polytope as defined in Definition \ref{def - measures}, I define a new family of lower bounds $\underline{\alpha}_{\Delta^n}^*$ on $\alpha_{\Delta^n}.$ In particular, these bounds arise from defining a family of unimodal functions $\varphi_n$ in Definition \ref{def - unimodal}. Once we have done so, and checked a number of properties, we arrive at the following result:
\begin{proposition}[A Lower Bound for the Transition Threshold]\label{prop - new threshold bound}
    Fix $\alpha>\beta\ge 2,$ and define $\underline{\alpha}_{\Delta^n}^*(\beta)$ as the largest solution of $\varphi_n(\alpha)=\varphi_n(\beta).$ Then $\underline{\alpha}_{\Delta^n}^*(\beta)\le \alpha_{\Delta^n}(\beta),$ where $\alpha_{\Delta^n}(\beta)$ is the simplex transition threshold defined in \cite[Theorem 1.5]{DLMSimplex}.
\end{proposition}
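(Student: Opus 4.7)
The plan is to use the cross-polytope $\mu_n^*(r)$ as a competitor to the unit simplex $\mu_n^\Delta$: whenever the optimal cross-polytope has strictly lower energy than $\mu_n^\Delta$ at $(\alpha,\beta)$, the simplex cannot minimize $\mathcal{E}_{\alpha,\beta}$, so $\alpha<\alpha_{\Delta^n}(\beta)$; the proposition will then follow by showing that this strict comparison persists throughout $\alpha\in(\beta,\underline{\alpha}_{\Delta^n}^*(\beta))$. First I would compute both energies explicitly. The $n+1$ vertices of $\mu_n^\Delta$ sit at pairwise distance $1$, so a direct expansion of the double integral gives $\mathcal{E}_{\alpha,\beta}[\mu_n^\Delta]=\frac{n}{2(n+1)}\bigl(\tfrac{1}{\alpha}-\tfrac{1}{\beta}\bigr)$. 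The cross-polytope $\mu_n^*(r)$ supports $n$ antipodal pairs at distance $2r$ and $2n(n-1)$ other pairs at distance $\sqrt{2}\,r$, which yields
\[
\mathcal{E}_{\alpha,\beta}[\mu_n^*(r)] \;=\; \frac{1}{4n}\left[\frac{r^\alpha A_\alpha}{\alpha} - \frac{r^\beta A_\beta}{\beta}\right],\qquad A_\gamma := 2^\gamma + 2(n-1)\,2^{\gamma/2}.
\]
Single-variable calculus locates the unique minimizer $r_*$ by the condition $r_*^{\alpha-\beta}=A_\beta/A_\alpha$, equivalently $r_*^\alpha A_\alpha=r_*^\beta A_\beta$; substituting this back produces $\min_{r>0}\mathcal{E}_{\alpha,\beta}[\mu_n^*(r)] = \tfrac{1}{4n} A_\beta^{\alpha/(\alpha-\beta)} A_\alpha^{-\beta/(\alpha-\beta)}\bigl(\tfrac{1}{\alpha}-\tfrac{1}{\beta}\bigr)$.

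Next I would compare the two values. Since $\alpha>\beta>0$ makes $\tfrac{1}{\alpha}-\tfrac{1}{\beta}<0$, dividing by this negative factor reverses the inequality: the optimal cross-polytope strictly undercuts the simplex if and only if
\[
A_\beta^{\alpha/(\alpha-\beta)}\,A_\alpha^{-\beta/(\alpha-\beta)} \;>\; \frac{2n^2}{n+1}.
\]
Taking logarithms, multiplying through by $(\alpha-\beta)>0$, and then dividing by $\alpha\beta>0$ rearranges this into
\[
\frac{\log A_\beta - \log\frac{2n^2}{n+1}}{\beta} \;>\; \frac{\log A_\alpha - \log\frac{2n^2}{n+1}}{\alpha},
\]
which is precisely the statement $\varphi_n(\alpha)<\varphi_n(\beta)$ for the function $\varphi_n$ from Definition \ref{def - unimodal}.

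Finally, I would invoke the unimodal shape of $\varphi_n$ established in the preparatory material. Since $\underline{\alpha}_{\Delta^n}^*(\beta)$ is by definition the largest $\alpha\ge\beta$ satisfying $\varphi_n(\alpha)=\varphi_n(\beta)$, unimodality forces $\varphi_n(\alpha)<\varphi_n(\beta)$ throughout the open interval $(\beta,\underline{\alpha}_{\Delta^n}^*(\beta))$. On this whole interval the optimal cross-polytope strictly undercuts the simplex, so $\mu_n^\Delta$ is not a minimizer of $\mathcal{E}_{\alpha,\beta}$ there and therefore $\alpha<\alpha_{\Delta^n}(\beta)$; passing to the supremum yields $\underline{\alpha}_{\Delta^n}^*(\beta)\le\alpha_{\Delta^n}(\beta)$. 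The principal obstacle is this last shape step — checking that the combination of "largest root" and unimodality actually precludes any interior crossing of the level $\varphi_n(\beta)$ between $\beta$ and $\underline{\alpha}_{\Delta^n}^*(\beta)$ and yields strict inequality on the full open interval — a fact that should be packaged into Definition \ref{def - unimodal} together with its accompanying preliminary lemmas.
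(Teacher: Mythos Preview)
Your approach is exactly the paper's: compute the optimal cross-polytope energy (this is Proposition~\ref{prop - cross poly energy}), compare it to the simplex energy and reduce the comparison to a sign condition on $\varphi_n$ (this is Lemma~\ref{lem - opp sign}), and then invoke unimodality (Lemma~\ref{lem - unimodality of phi_n}) to cover the whole interval $(\beta,\underline{\alpha}_{\Delta^n}^*(\beta))$.

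There is, however, a sign slip in your translation to $\varphi_n$. Your displayed inequality
\[
\frac{\log A_\beta-\log\frac{2n^2}{n+1}}{\beta}\;>\;\frac{\log A_\alpha-\log\frac{2n^2}{n+1}}{\alpha}
\]
is equivalent to $\varphi_n(\alpha)>\varphi_n(\beta)$, not $\varphi_n(\alpha)<\varphi_n(\beta)$, for the $\varphi_n$ of Definition~\ref{def - unimodal}: indeed $\bigl(A_\gamma\cdot\tfrac{n+1}{2n^2}\bigr)^{1/\gamma}=\sqrt{2}\,(-\varphi_n(\gamma))$, so your quantity is $\log(-\sqrt{2}\,\varphi_n(\gamma))$, a \emph{decreasing} function of $\varphi_n$. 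Correspondingly, since $\varphi_n$ has a unique \emph{maximum} (Lemma~\ref{lem - unimodality of phi_n}), unimodality gives $\varphi_n(\alpha)>\varphi_n(\beta)$ on $(\beta,\underline{\alpha}_{\Delta^n}^*(\beta))$, which is the direction you need. With both inequalities flipped the argument is correct and matches the paper's proof verbatim.
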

\noindent While we have not been able to find a rigorous proof that this bound is superior to those set out in my joint work with Lim and McCann in \cite{DLMSimplex}, I conclude Section \ref{sec-threshold}, and hence this paper, by providing some evidence which supports the conclusion that $\underline{\alpha}_{\Delta^n}^*$ is a better bound than $\alpha_{\Delta^n},$ at least in some cases.

\section{Parameter Space Concavity and Resulting Bounds}\label{sec-convexity}

We begin by establishing some basic concavity properties, starting with the following general proposition, a special case of which provides the basis for later analysis:

\begin{proposition}[Convexity and Concavity Properties of Parametrized Energies]\label{prop-general convex}
    Let $\Phi$ and $\Psi$ be convex parameter spaces and for $(\varphi,\psi)\in \Phi\times\Psi,$ define $W_{\varphi,\psi}(x,y)=w_\varphi(|x-y|)+w_\psi(|x-y|).$ Assume that for each $(\varphi,\psi)\in \Phi\times\Psi,$ the interaction energy $$\mathcal{E}_{\varphi,\psi}[\mu]:=\frac{1}{2}\iint_{\mathbb{R}^n\times\mathbb{R}^n}W_{\varphi,\psi}(x,y)d\mu(x)d\mu(y)$$
    is uniformly bounded below for $\mu\in\mathcal{F}\subseteq\mathcal{P}(\mathbb{R}^d),$ and define $E^\mathcal{F}_{\psi}:\Phi\to\mathbb{R}$ by $E^\mathcal{F}_\psi(\varphi):=\inf_{\mu\in\mathcal{F}}\mathcal{E}_{\varphi,\psi}[\mu].$ Then:
    \begin{enumerate}
        \renewcommand{\theenumi}{\alph{enumi})}
        \item if, for a fixed $\psi\in\Psi,$ the map $\varphi\mapsto w_\varphi(t)$ is concave for each $t\in [0,\infty),$ then for any $\mu\in\mathcal{F},$ the map $\varphi\mapsto \mathcal{E}_{\varphi, \psi}[\mu]$ is concave on $\Phi$. 
        \item if $\psi$ is fixed and the map $\varphi\mapsto \mathcal{E}_{\varphi, \psi}[\mu]$ is concave for each $\mu\in \mathcal{F}$, then the map $\varphi\mapsto E^\mathcal{F}_{\varphi,\psi}$ is concave on $\Phi$.
    \end{enumerate}
\end{proposition}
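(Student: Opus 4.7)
The plan is to treat the two parts separately, as each reduces to a single standard principle.

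For part (a), I would fix $\mu\in\mathcal{F}$, $\psi\in\Psi$, and two parameters $\varphi_0,\varphi_1\in \Phi$, and set $\varphi_t=(1-t)\varphi_0+t\varphi_1$ for $t\in[0,1]$. For every fixed $(x,y)\in\mathbb{R}^n\times\mathbb{R}^n$, the concavity of $\varphi\mapsto w_\varphi(|x-y|)$ yields the pointwise inequality
\begin{equation*}
W_{\varphi_t,\psi}(x,y)\ge (1-t)W_{\varphi_0,\psi}(x,y)+tW_{\varphi_1,\psi}(x,y),
\end{equation*}
since the $w_\psi(|x-y|)$ term is fixed. Integration against the non-negative product measure $d\mu(x)d\mu(y)$ preserves the inequality, so $\mathcal{E}_{\varphi_t,\psi}[\mu]\ge (1-t)\mathcal{E}_{\varphi_0,\psi}[\mu]+t\mathcal{E}_{\varphi_1,\psi}[\mu]$, which is concavity in $\varphi$.

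For part (b), I would exploit the elementary fact that the pointwise infimum of a family of concave functions is concave. Explicitly, for any $\mu\in\mathcal{F}$ and $t\in[0,1]$, the hypothesized concavity of $\varphi\mapsto\mathcal{E}_{\varphi,\psi}[\mu]$ gives
\begin{equation*}
\mathcal{E}_{\varphi_t,\psi}[\mu]\ge (1-t)\mathcal{E}_{\varphi_0,\psi}[\mu]+t\mathcal{E}_{\varphi_1,\psi}[\mu]\ge (1-t)E^{\mathcal{F}}_\psi(\varphi_0)+tE^{\mathcal{F}}_\psi(\varphi_1).
\end{equation*}
The right-hand side no longer depends on $\mu$, so taking the infimum over $\mu\in\mathcal{F}$ on the left yields $E^{\mathcal{F}}_\psi(\varphi_t)\ge (1-t)E^{\mathcal{F}}_\psi(\varphi_0)+tE^{\mathcal{F}}_\psi(\varphi_1)$.

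The only real bookkeeping concern, rather than a true obstacle, is making sure the quantities involved are well-defined and the inequalities make sense. The uniform lower bound assumption on $\mathcal{E}_{\varphi,\psi}[\mu]$ for $\mu\in\mathcal{F}$ ensures $E^{\mathcal{F}}_\psi(\varphi)>-\infty$, so the concavity inequality in part (b) involves no indeterminate forms; similarly, if $\mathcal{E}_{\varphi_j,\psi}[\mu]=+\infty$ for some endpoint in part (a), the inequality holds trivially. No further subtlety arises, since both arguments are purely pointwise followed by monotone operations (integration against a probability measure, taking infima).
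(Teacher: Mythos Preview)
Your proposal is correct and follows essentially the same approach as the paper: part (a) is obtained by integrating the pointwise concavity inequality for $W_{\varphi,\psi}$, and part (b) by invoking that the pointwise infimum of concave functions is concave (the paper cites Rockafellar for this, while you write out the one-line verification). Your brief remarks on well-definedness are a nice addition but not present in the paper's version.
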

\begin{proof}
 For part a), we fix $t\in[0,1]$ and $\varphi_0,\varphi_1\in\Phi$ and compute
\begin{align*}
    \mathcal{E}_{(1-t)\varphi_0+t\varphi_1,\psi_*}[\mu]  
 & =\frac12\iint_{\mathbb{R}^n\times\mathbb{R}^n} w_{(1-t)\varphi_0+t\varphi_1}(|x-y|)+w_{\psi_*}(|x-y|)\ d\mu(x)d\mu(y)\\
 & \ge \frac12\iint_{\mathbb{R}^n\times\mathbb{R}^n}(1-t)w_{\varphi_0}(|x-y|)+tw_{\varphi_1}(|x-y|)+w_{\psi_*}(|x-y|)\ d\mu(x)d\mu(y)\\
 & = (1-t)\mathcal{E}_{\varphi_0,\psi_*}[\mu]+t\mathcal{E}_{\varphi_1,\psi_*},
\end{align*}
as desired. Part b) follows from noting that, in this case, $E^\mathcal{F}_{\varphi,\psi_*}=\inf_{\mu\in F}\mathcal{E}_\varphi,\psi_*[\mu]$ is an infimum of concave functions of $\varphi$ and applying, for example, \cite[Theorem 5.5]{Rockafellar}.
\end{proof}

This general proposition forms the basis of our proof of Proposition \ref{prop - concave increasing}:

\begin{proof}[Proof of Proposition \ref{prop - concave increasing}]
    First note that, for every $\beta\in(0,\alpha],$ $E_\alpha^\mathcal{F}(\beta)\ge E_\alpha(\beta)>-\infty$ by \cite[Theorem 2.3]{CFT}. We next show that $E_\alpha^\mathcal{F}$ is concave. This is an immediate consequence of \ref{prop-general convex} applied with $\Phi=(0,\alpha],$ $\Psi=\{\alpha\},$ $w_\psi(t)=w_\alpha(t)=\frac{t^\alpha}{\alpha},$ and, for $\beta\in \Phi,$ $w_\beta(t)=-\frac{t^\beta}{\beta}.$ In particular, the assumption that the energy is uniformly bounded below follows again from \cite[Theorem 2.3]{CFT}. Likewise, concavity of the map $\beta\mapsto w_\beta(t)$ for $t\ge 0$ follows from computing $$-\frac{d^2}{d\beta^2}\frac{t^\beta}{\beta}=\begin{cases}
        -\frac{t^\beta\left((\beta\log t)^2-2(\beta\log t)+2\right)}{\beta^3}, & t>0\\
        0, & t=0,
    \end{cases}$$
    using the fact $s^2-2s+2\ge 1$ for any $s\in\mathbb{R}$ to see that this second derivative is negative for $t>0.$ Continuity follows from $E_\alpha^\mathcal{F}$ being concave through, for example, \cite[Corollary 10.1.1]{Rockafellar}.

    To prove that $E_\alpha^\mathcal{F}$ is non-decreasing, first fix $0<\beta<\beta'<\alpha.$ We recall that, from \cite[Proposition 2.1]{DLMSphere}, any global minimizer $\mu$ of $\mathcal{E}_{\alpha,\beta}$ in $\mathcal{P}(\mathbb{R}^n)$ satisfies the diameter support bound $\operatorname{diam}(\operatorname{spt}(\mu))\le e^{1/\beta}$ and likewise any global minimizer $\mu$ of $\mathcal{E}_{\alpha,\beta'}$ satisfies $\operatorname{diam}(\operatorname{spt}(\mu))\le e^{1/\beta'}<e^{1/\beta}.$ As such, we may write \begin{equation} \label{eqn - diam bound}E_\alpha(\beta)=\inf\{\mathcal{E}_{\alpha,\beta}(\mu)\ |\ \operatorname{diam}(\operatorname{spt}(\mu))\le e^{1/\beta}\}.\end{equation}
    and 
    \begin{equation} \label{eqn - diam bound2}E_\alpha(\beta')=\inf\{\mathcal{E}_{\alpha,\beta'}(\mu)\ |\ \operatorname{diam}(\operatorname{spt}(\mu))\le e^{1/\beta}\}.\end{equation}
    Moreover, notice that $-\frac{d}{d\beta}\frac{t^\beta}{\beta}=\frac{t^\beta(1-\log(t^\beta))}{\beta^2}\ge 0$ for $t\in (0,e^{1/\beta}]$ and $\frac{d}{d\beta}\frac{t^\beta}{\beta}=0$ if $t=0.$ Thus, the function $-\frac{t^\beta}{\beta}$ is non-decreasing in $\beta$ for $t\in [0,e^{1/\beta}].$ As such, for any  $\mu$ with $\operatorname{diam}(\operatorname{spt}(\mu))\le e^{1/\beta},$ we find that:
    \begin{align*}
        \mathcal{E}_{\alpha,\beta}[\mu] & = \frac12
        \iint_{\mathbb{R}^n\times\mathbb{R}^n} \frac{|x-y|^\alpha}{\alpha}-\frac{|x-y|^\beta}{\beta}d\mu(x)d\mu(y)\\
        & \le \frac12\iint_{\mathbb{R}^n\times\mathbb{R}^n} \frac{|x-y|^\alpha}{\alpha}-\frac{|x-y|^{\beta'}}{\beta'}d\mu(x)d\mu(y)\\
        & =\mathcal{E}_{\alpha,\beta'}[\mu].
    \end{align*}
    Taking an infimum over all $\mu\in\mathcal{P}(\mathbb{R}^n)$ satisfying the diameter bound, we deduce that $E_\alpha(\beta)\le E_\alpha(\beta'),$ as desired. 
\end{proof}

\begin{remark}
    A similar result to Proposition \ref{prop - concave increasing} holds if we fix $\beta<0$ and consider the minimal energy as a function of $\alpha\in (\beta,0).$ However, in this regime, much less is known about minimizers of the interaction energy, so fewer conclusions can be drawn from this result. 
\end{remark}

An immediate application of the concavity discussed in Proposition \ref{prop - concave increasing} is that, if $\beta_0,\beta_1$ are such that $E_\alpha(\beta_0)$ and $E_\alpha(\beta_1)$ are known, we may use linear interpolation to find explicit lower bounds for $E_\alpha(\beta)$ for $\beta\in [\beta_0,\beta_1].$ More precisely, the following holds:

\begin{lemma}[Concave Interpolation Bound on the Energy]\label{lem - linear interpolation}
    Let $0<\beta_0<\beta_1\le \alpha,$ and define $\beta_t:=(1-t)\beta_0+t\beta_1.$ Then $$E_\alpha(\beta_t)\ge (1-t)E_\alpha(\beta_0)+tE_\alpha(\beta_1).$$
\end{lemma}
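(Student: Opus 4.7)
The plan is essentially to invoke the concavity half of Proposition \ref{prop - concave increasing} directly. That proposition, applied with $\mathcal{F}=\mathcal{P}(\mathbb{R}^n)$, tells us that $E_\alpha:(0,\alpha]\to\mathbb{R}$ is a concave function, and the claimed inequality is nothing more than the defining inequality of concavity evaluated at the specific convex combination $\beta_t = (1-t)\beta_0 + t\beta_1 \in [\beta_0,\beta_1] \subseteq (0,\alpha]$.

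So first I would note that the hypothesis $0<\beta_0<\beta_1 \le \alpha$ ensures both endpoints lie in the domain $(0,\alpha]$ on which $E_\alpha$ is defined and finite (by Proposition \ref{prop - concave increasing}, whose proof already invoked \cite[Theorem 2.3]{CFT} to guarantee $E_\alpha(\beta) \in \mathbb{R}$ throughout $(0,\alpha]$). Then, since $t \in [0,1]$, $\beta_t \in [\beta_0,\beta_1] \subseteq (0,\alpha]$ as well, so $E_\alpha(\beta_t)$ is also finite. Concavity of $E_\alpha$ on $(0,\alpha]$ then yields
\[E_\alpha(\beta_t) = E_\alpha((1-t)\beta_0 + t\beta_1) \ge (1-t)E_\alpha(\beta_0) + tE_\alpha(\beta_1),\]
which is the desired inequality.

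There is no real obstacle here: the work has already been done in establishing Proposition \ref{prop - concave increasing}, where the concavity of $\beta \mapsto -t^\beta/\beta$ (verified via the second derivative computation, using $s^2 - 2s + 2 \ge 1$) was combined with Proposition \ref{prop-general convex} to yield concavity of the infimum $E_\alpha$. The present lemma simply records the most useful consequence of that concavity in a form convenient for later interpolation arguments, and the proof is a one-line deduction. The only mild subtlety worth mentioning explicitly is the endpoint case $\beta_1 = \alpha$, which is permitted by the hypothesis and causes no trouble because Proposition \ref{prop - concave increasing} asserts concavity on the closed-at-$\alpha$ interval $(0,\alpha]$.
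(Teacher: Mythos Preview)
Your proposal is correct and matches the paper's own proof exactly: the paper simply states that the lemma is a direct consequence of Proposition \ref{prop - concave increasing} and the definition of concavity. Your additional remarks about finiteness and the endpoint $\beta_1=\alpha$ are accurate elaborations but not strictly needed.
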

\begin{proof}
    This is a direct consequence of Proposition \ref{prop - concave increasing} and the definition of concavity. 
\end{proof}
Of course, it is more insightful to apply the preceding lemma to cases where minimizers are explicitly known for $\beta_0$ and $\beta_1,$ but not for $\beta\in(\beta_0,\beta_1).$ However, it is often the case that, even for values of $(\alpha,\beta)$ with $E_\alpha(\beta)$ is explicitly known, the formula for $E_{\alpha}(\beta)$ can be quite unwieldy (see, for example, \cite[Theorem 1]{Frank} and \cite[Theorem 1]{FM}). As such, we instead provide an example application of Lemma \ref{lem - linear interpolation} as a proof-of-concept, and then provide an overview of the relevant literature.

\begin{example}\label{ex - sample bound}
    Let $n\ge 2.$ Then for $\alpha\in (2,4),$ and $\beta\in (2,\alpha),$ 
    $$E_\alpha(\beta)\ge -\frac{\alpha-\beta}{\alpha-2}\frac{2^{d-3}}{\sqrt{\pi}}\frac{\Gamma(\frac{d}{2})\Gamma(\frac{d+\alpha-1}{2})}{\Gamma(\frac{2d+\alpha-2}{2})}\left(\frac{1}{2}-\frac{1}{\alpha}\right)\left(\frac{\Gamma(\frac{d+1}{2})\Gamma(\frac{2d+\alpha-2}{2})}{\Gamma(\frac{d+\alpha-1}{2})\Gamma(d)}\right)^{\frac{\alpha}{\alpha-\beta}}.$$
\end{example}
\begin{proof}
    Let $\beta_0=2$ and $\beta_1=\alpha,$ so that $\beta=\frac{\alpha-\beta}{\alpha-2}\cdot 2+ \frac{\beta-2}{\alpha-2}\cdot \alpha.$ By definition $\mathcal{E}_{\alpha,\alpha}[\mu]=0$ for any $\mu\in\mathcal{P}(\mathbb{R}^n)$ so that $E_\alpha(\alpha)=0.$ Moreover, by \cite[Theorem 1]{FM}, 
    $$E_\alpha(2)=-\frac{2^{d-3}}{\sqrt{\pi}}\frac{\Gamma(\frac{d}{2})\Gamma(\frac{d+\alpha-1}{2})}{\Gamma(\frac{2d+\alpha-2}{2})}\left(\frac{1}{2}-\frac{1}{\alpha}\right)\left(\frac{\Gamma(\frac{d+1}{2})\Gamma(\frac{2d+\alpha-2}{2})}{\Gamma(\frac{d+\alpha-1}{2})\Gamma(d)}\right)^{\frac{\alpha}{\alpha-\beta}}.$$
    Thus, by Lemma \ref{lem - linear interpolation}, we deduce the desired result. 
\end{proof}

\begin{remark}[Use Cases of Lemma \ref{lem - linear interpolation}]\label{rem - concavity cases}
The bounds provided by Lemma \ref{lem - linear interpolation} are useful in cases where there exist $\beta_0<\beta_1$ such that minimizers of $\mathcal{E}_{\alpha,\beta_0}$ and $\mathcal{E}_{\alpha,\beta_1}$ are known, but minimizers for $\mathcal{E}_{\alpha,\beta}$ are unknown for at least some $\beta\in (\beta_0,\beta_1).$ Given the current state of the literature, Lemma \ref{lem - linear interpolation} can be used to provide energy bounds in the following situations:
\begin{itemize}
    \item $n=1,$ $\alpha\in (2,3),$ $\beta\in [2,\alpha].$ In this case, the minimal energy $E_{\alpha}(2)$ is explicitly known due to \cite[Theorem 1]{Frank}. On the other hand, $E_\alpha(\alpha)$ is identically zero, which allows us to interpolate. However, by applying the work on transition threshold bounds in \cite[Section 4]{DLMSimplex}, we can get better bounds for $\alpha\in \left(\frac{1}{\log(3/2)}, 3\right).$ In particular, following \cite[Definition 4.1]{DLMSimplex}, for such an $\alpha,$ define $\beta_1(\alpha)$ as the smallest solution to $$\frac{(3/2)^\alpha}{\alpha}=\frac{(3/2)^\beta}{\beta}$$
    which, in the notation of that paper, ensures that $\alpha=\alpha^*_\infty(\beta_1(\alpha)).$ Thus, by \cite[Corollary 4.5]{DLMSimplex}, $\mathcal{E}_{\alpha,\beta_1(\alpha)}$ is uniquely minimized by unit simplices, so that 
    $$
    E_\alpha(\beta_1(\alpha))=\frac{1}{4}\left(\frac{1}{\alpha}-\frac{1}{\beta_1(\alpha)}\right)=\frac{1}{4\alpha}(1-(3/2)^{\alpha-\beta_1(\alpha)}).$$
    \item $n\ge 2,$ $\alpha\in (2,4),$ $\beta\in [2,\alpha].$ As we discussed in Example \ref{ex - sample bound}, minimizers of $\mathcal{E}_{\alpha,2}$ for this range of $\alpha$ were shown to be spherical shells in \cite{DLMSphere}, and $E_\alpha(2)$ was calculated in \cite[Theorem 1]{FM}. However, analogously to the one-dimensional case, it is possible to get better bounds if $\alpha\in \left(\frac{2}{\log 2}, 4\right).$ More specifically, in this case, we can define $\beta_1(\alpha)$ as the smallest solution to $2^{\alpha/2}/\alpha=2^{\beta/2}/\beta,$ which again allows us to apply \cite[Corollary 4.5]{DLMSimplex} to check that $\mathcal{E}_{\alpha,\beta_1(\alpha)}$ is uniquely minimized by unit simplices, and hence that $$
    E_\alpha(\beta_1(\alpha))=\frac{1}{2}\frac{n}{n+1}\left(\frac{1}{\alpha}-\frac{1}{\beta_1(\alpha)}\right)=\frac{1}{2\alpha}\frac{n}{n+1}(1-(3/2)^{\alpha-\beta_1(\alpha)}).$$
\end{itemize}
\end{remark}

We now turn our attention to improving Lemma \ref{lem - linear interpolation} by using strong concavity as in Definition \ref{def - strong concave}. 
Since it will be useful later, and since we lack a source for this, we prove the following useful property of strongly concave functions.
\begin{lemma}[Strong Concavity of the Pointwise Infimum]\label{lem - inf str cvx}
    Let $\{f_i\}_{i\in I}$ be a family of strongly concave functions on $[a,b]$ which are uniformly bounded below on $[a,b]$ and which have uniform concavity parameter $m.$ Then the pointwise infimum 
    $f:=\inf_{i\in I}f_i$ is also strongly concave with concavity parameter $m.$
\end{lemma}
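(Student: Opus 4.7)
The plan is to prove strong concavity of $f:=\inf_{i\in I}f_i$ directly from the defining inequality in Definition \ref{def - strong concave}, by applying that inequality to each $f_i$ separately and then passing to the infimum. This is really just the concave analogue of the well-known fact that the pointwise supremum of strongly convex functions is strongly convex, with the constraint that the family be uniformly bounded from the appropriate side to guarantee that the envelope is finite-valued.

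Concretely, the first step is to record that the uniform lower bound hypothesis ensures $f(x)\in\mathbb{R}$ for every $x\in[a,b]$, so that the inequality to be verified even makes sense. Next, for arbitrary $x_0,x_1\in[a,b]$ and $t\in[0,1]$, write $x_t:=(1-t)x_0+tx_1$. For each $i\in I$, the strong concavity of $f_i$ gives
$$f_i(x_t)\ge (1-t)f_i(x_0)+tf_i(x_1)+t(1-t)\frac{m}{2}(x_0-x_1)^2.$$
Since $f(x_0)\le f_i(x_0)$ and $f(x_1)\le f_i(x_1)$ by definition of the infimum, I would weaken the right-hand side to obtain
$$f_i(x_t)\ge (1-t)f(x_0)+tf(x_1)+t(1-t)\frac{m}{2}(x_0-x_1)^2,$$
where the right-hand side is now independent of $i$. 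Taking the infimum of the left-hand side over $i\in I$ then yields the desired estimate for $f$ with the same parameter $m$.

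I do not anticipate any real obstacle: the whole argument is three inequalities chained together. The two points worth flagging are that (a) uniform boundedness from below is exactly what one needs for $f$ to be real-valued on $[a,b]$, and (b) the crucial feature which lets one retain the same concavity constant $m$ is that the quadratic bonus term $t(1-t)\tfrac{m}{2}(x_0-x_1)^2$ carries no $i$-dependence and so is unaffected by taking the infimum. No compactness, subsequence extraction, or regularity assumption on the $f_i$ is required.
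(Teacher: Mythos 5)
Your argument is correct and is essentially the same as the paper's proof: both apply the defining strong-concavity inequality to each $f_i$, replace $f_i(x_0)$, $f_i(x_1)$ by the infimum using the $i$-independence of the quadratic term, and then pass to the infimum over $i$, with only a trivial difference in the order of these steps.
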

\begin{proof}
    Fix $x_0,x_1\in[a,b]$ and explicitly compute:
    \begin{align*}
        f((1-t)x_0+tx_1) & =\inf_{i\in I}f_i((1-t)x_0+tx_1)\\
        & \ge \inf_{i\in I}\left[(1-t)f_i(x_0)+tf_i(x_1)+t(1-t)\frac{m}{2}(x_0-x_1)^2\right]\\
        & \ge (1-t)\inf_{i\in I}f_i(x_0)+t\inf_{i\in I}f_i(x_1)+t(1-t)\frac{m}{2}(x_0-x_1)^2\\
        & =(1-t)f(x_0)+tf(x_1)+t(1-t)\frac{m}{2}(x_0-x_1)^2,
    \end{align*}
    as desired. 
\end{proof}

In the following lemmas, we outline a strategy for proving that $E_\alpha(\beta)$ is strongly concave on certain intervals, note that we can use it, along with the same techniques described in Example \ref{ex - sample bound} and Remark \ref{rem - concavity cases}, to derive lower bounds on the energy. We caution the reader that we make a number of simplifying estimates which mean that the concavity parameter we eventually derive is suboptimal. Because of this, we phrase the following train of thought as a series of short lemmas, rather than a single, longer result:

\begin{lemma}[Second Derivative of the Energy]\label{lem - 2nd beta derivative}
Fix $\alpha>\beta>0$ and let $\mu\in\mathcal{P}_c(\mathbb{R}^n)$ be a fixed, compactly supported probability measure. Then 
$$2\frac{d^2}{d\beta^2}\mathcal{E}_{\alpha,\beta}[\mu]=-\iint_{\mathbb{R}^n\times\mathbb{R}^n}\frac{|x-y|^\beta((\log(|x-y|^\beta))^2-2\log(|x-y|^\beta)+2)}{\beta^3}d\mu(x)d\mu(y),$$
with the convention that $0((\log 0)^2-2\log(0)+2)=0.$ 
\end{lemma}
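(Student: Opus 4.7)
The plan is a direct application of differentiation under the integral sign, justified via the Dominated Convergence Theorem applied twice in succession. I would first decompose
\[
2\mathcal{E}_{\alpha,\beta}[\mu]=\iint_{\mathbb{R}^n\times\mathbb{R}^n}\frac{|x-y|^\alpha}{\alpha}\,d\mu(x)d\mu(y)-\iint_{\mathbb{R}^n\times\mathbb{R}^n}\frac{|x-y|^\beta}{\beta}\,d\mu(x)d\mu(y),
\]
observe that the $\alpha$-term is constant in $\beta$, and reduce the problem to differentiating $F(\beta):=-\iint g_\beta(|x-y|)\,d\mu(x)d\mu(y)$, where $g_\beta(t):=t^\beta/\beta$.

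The pointwise computation is elementary: for $t>0$, repeated differentiation in $\beta$ yields
\[
\frac{d^2}{d\beta^2}g_\beta(t)=\frac{t^\beta\bigl((\beta\log t)^2-2(\beta\log t)+2\bigr)}{\beta^3}=\frac{t^\beta\bigl((\log t^\beta)^2-2\log t^\beta+2\bigr)}{\beta^3}.
\]
Inserting this into $F''(\beta)$ produces exactly the claimed integrand (with the prescribed sign). At $t=0$, both $g_\beta$ and all of its $\beta$-derivatives vanish identically, which matches the stated convention $0\cdot\bigl((\log 0)^2-2\log 0+2\bigr)=0$.

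The only nontrivial step is to justify that the $\beta$-derivative commutes with the double integral, which is where the assumption $\mu\in\mathcal{P}_c(\mathbb{R}^n)$ enters. Compactness of $\mathrm{spt}(\mu)$ provides a $D>0$ with $|x-y|\le D$ holding $(\mu\otimes\mu)$-a.e., so it suffices to dominate both $\partial_\beta g_{\beta'}$ and $\partial_\beta^2 g_{\beta'}$ by a constant uniformly for $t\in[0,D]$ and for $\beta'$ in a small closed interval $[\beta-\varepsilon,\beta+\varepsilon]\subset(0,\alpha)$; any such constant is trivially $(\mu\otimes\mu)$-integrable because $\mu\otimes\mu$ is a finite measure. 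The main obstacle is the logarithmic singularity at $t=0$: for $t\in(0,1]$ and $\beta'\in[\beta-\varepsilon,\beta+\varepsilon]$ I would use the monotonicity $t^{\beta'}\le t^{\beta-\varepsilon}$ and verify by elementary calculus that $t^{\beta-\varepsilon}|\log t|^k$ is bounded on $(0,1]$ for $k=0,1,2$ (the supremum is attained at $t=e^{-k/(\beta-\varepsilon)}$ and equals $(k/(\beta-\varepsilon))^k e^{-k}$), while on $[1,D]$ the quantity $t^{\beta'}|\log t|^k$ is bounded by $D^{\beta+\varepsilon}(\log D)^k$. Combining these bounds yields a uniform $L^\infty$ domination of the first- and second-order difference quotients, which licenses the two applications of DCT and delivers the formula.
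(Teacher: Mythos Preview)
Your proposal is correct and follows essentially the same route as the paper: compute the pointwise $\beta$-derivatives of $W_{\alpha,\beta}$ and justify differentiation under the integral sign via dominated convergence, using compact support of $\mu$ to obtain an integrable (indeed bounded) majorant. The only cosmetic difference is that you work out explicit $L^\infty$ bounds for $t^{\beta'}|\log t|^k$ on $(0,1]$ and $[1,D]$ uniformly over $\beta'\in[\beta-\varepsilon,\beta+\varepsilon]$, whereas the paper simply observes that the first and second $\beta$-derivatives of $W_{\alpha,\beta}$ extend continuously to the diagonal $\{x=y\}$ and are therefore bounded on the compact set $\operatorname{spt}(\mu\otimes\mu)$ (implicitly also in $\beta$ over a compact subinterval), then cites \cite[Theorem 2.27(b)]{Folland}.
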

\begin{proof}
    This is more or less a direct application of the dominated convergence theorem. To do so, we recall that, as we calculated in the proof of Proposition \ref{prop - concave increasing}, 
    $$\frac{d}{d\beta}W_{\alpha,\beta}(x,y)=\begin{cases}
        0 & \text{ if }x=y\\
        \frac{|x-y|^\beta(1-\log(|x-y|^\beta))}{\beta^2} & \text{ if }x\ne y. 
    \end{cases}$$
    and $$\frac{d^2}{d\beta^2}W_{\alpha,\beta}(x,y)=\begin{cases}
        0 & \text{ if }x=y\\
        -\frac{|x-y|^\beta((\log(|x-y|^\beta))^2-2\log(|x-y|^\beta)+2)}{\beta^3} & \text{ if }x\ne y.\end{cases}.$$
    Each of these derivatives is continuous, and hence bounded on the compact set $\operatorname{spt}(\mu\otimes \mu),$ which allows us to apply the Lebesgue dominated convergence theorem as in \cite[Theorem 2.27(b)]{Folland} to recover the desired result. 
\end{proof}

Applying this calculation, along with some small estimates, yields the following differential inequality, which may be of interest in its own right:
\begin{lemma}[Differential Inequality for the Energy]\label{lem - diff ineq}
    Let $\alpha>\beta>0$ and let $\mu\in \mathcal{P}_c(\mathbb{R}^n).$ Then $$\frac{d^2}{d\beta^2}\mathcal{E}_{\alpha,\beta}[\mu]\le \frac{1}{\beta^2}\mathcal{E}_{\alpha,\beta}[\mu].$$
\end{lemma}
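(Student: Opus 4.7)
The plan is to reduce the inequality to a pointwise estimate on the integrand. I would start by rearranging the desired estimate as
\[
\frac{d^2}{d\beta^2}\mathcal{E}_{\alpha,\beta}[\mu] - \frac{1}{\beta^2}\mathcal{E}_{\alpha,\beta}[\mu] \le 0,
\]
then substitute the explicit formula from Lemma \ref{lem - 2nd beta derivative} into the first term and the definitions \eqref{eqn - general ie}, \eqref{eqn - potential} into the second. Both terms become double integrals against $d\mu(x)d\mu(y)$, so by linearity it suffices to show the combined integrand is non-positive almost everywhere with respect to $\mu\otimes\mu$.

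I would next split the combined integrand into its $|x-y|^\alpha$-part and its $|x-y|^\beta$-part. The first collapses to $-\tfrac{1}{2\alpha\beta^2}|x-y|^\alpha$, which is manifestly non-positive since $\alpha,\beta>0$. For the $|x-y|^\beta$-part, abbreviating $L := \log(|x-y|^\beta)$, the coefficient telescopes; the contribution from $\mathcal{E}_{\alpha,\beta}[\mu]/\beta^2$ cancels the constant term inside the expression from Lemma \ref{lem - 2nd beta derivative}, leaving
\[
\frac{|x-y|^\beta}{2\beta^3}\bigl(1 - (L^2 - 2L + 2)\bigr) = -\frac{|x-y|^\beta}{2\beta^3}(L - 1)^2,
\]
by recognising the perfect square $L^2 - 2L + 1 = (L-1)^2$. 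This is likewise non-positive, and summing the two pieces before integrating against $\mu\otimes\mu$ produces the differential inequality.

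The only mild subtlety is the behaviour at $x=y$, but the convention adopted in Lemma \ref{lem - 2nd beta derivative}, together with $|x-y|^\beta\to 0$ as $|x-y|\to 0$ for $\beta>0$, handles this uniformly on $\operatorname{spt}(\mu\otimes\mu)$. I do not anticipate any real obstacle here: beyond applying Lemma \ref{lem - 2nd beta derivative}, the entire content of the proof is spotting the perfect-square identity that makes the $|x-y|^\beta$-coefficient non-positive.
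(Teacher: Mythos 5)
Your proof is correct and is essentially the paper's argument in a slightly different packaging: the paper first applies Lemma \ref{lem - 2nd beta derivative} together with $s^2-2s+2\ge 1$ (your perfect square $(L-1)^2\ge 0$) and then discards the non-negative $|x-y|^\alpha/\alpha$ contribution, whereas you fold both observations into a single pointwise bound on the combined integrand. No gaps.
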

\begin{proof}
    By applying Lemma \ref{lem - 2nd beta derivative} and noticing that $s^2-2s+2\ge 1$ for all $s\in \mathbb{R},$ we deduce that 
    \begin{align*}
        \frac{d^2}{d\beta^2}\mathcal{E}_{\alpha,\beta}[\mu] & \le \frac{1}{2}\iint_{\mathbb{R}^n\times\mathbb{R}^n}-\frac{|x-y|^\beta}{\beta^3}d\mu(x)d\mu(y)\\
        & \le \frac{1}{2\beta^2}\iint_{\mathbb{R}^n\times\mathbb{R}^n}\frac{|x-y|^\alpha}{\alpha}-\frac{|x-y|^\beta}{\beta}d\mu(x)d\mu(y)\\
        & = \frac{1}{\beta^2}\mathcal{E}_{\alpha,\beta}(\mu).
    \end{align*}
\end{proof}

Finally, we can use the preceding differential inequality, along with a comparison to the simplex, to deduce the following uniform bound. 
\begin{lemma}[Concavity Parameter of the Energy]\label{lem - unif bd}
    Let $0<\beta_0<\beta<\beta_1\le \alpha$ and assume that $\mu\in\mathcal{P}_c(\mathbb{R}^n)$ satisfies $\mathcal{E}_{\alpha,\beta}[\mu]\le \frac{1}{2}\frac{n}{n+1}\left[\frac{1}{\alpha}-\frac{1}{\beta_1}\right].$ Then $$\frac{d^2}{d\beta^2}\mathcal{E}_{\alpha,\beta}[\mu]\le \frac{1}{2\beta_1^2}\frac{n}{n+1}\left[\frac{1}{\alpha}-\frac{1}{\beta_1}\right].$$
\end{lemma}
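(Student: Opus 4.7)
The plan is to chain together the differential inequality from Lemma~\ref{lem - diff ineq} with the hypothesized upper bound on the energy, paying careful attention to signs. First I would apply Lemma~\ref{lem - diff ineq} directly to obtain the starting estimate
\[
\frac{d^2}{d\beta^2}\mathcal{E}_{\alpha,\beta}[\mu] \;\le\; \frac{1}{\beta^2}\mathcal{E}_{\alpha,\beta}[\mu].
\]
Then I would observe that, because $\alpha \ge \beta_1$, the quantity $\frac{1}{2}\frac{n}{n+1}\bigl[\frac{1}{\alpha}-\frac{1}{\beta_1}\bigr]$ is non-positive, so the hypothesis on $\mu$ forces $\mathcal{E}_{\alpha,\beta}[\mu] \le 0$. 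This sign is the only subtlety in what follows.

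Next I would replace $\frac{1}{\beta^2}$ by $\frac{1}{\beta_1^2}$. Since $\beta<\beta_1$ we have $\frac{1}{\beta^2} \ge \frac{1}{\beta_1^2} > 0$, and multiplying the non-positive number $\mathcal{E}_{\alpha,\beta}[\mu]$ by the smaller positive factor yields a \emph{larger} (less negative) result, so
\[
\frac{1}{\beta^2}\mathcal{E}_{\alpha,\beta}[\mu] \;\le\; \frac{1}{\beta_1^2}\mathcal{E}_{\alpha,\beta}[\mu].
\]
Finally I would substitute the hypothesized upper bound on $\mathcal{E}_{\alpha,\beta}[\mu]$ (now multiplied by the positive constant $\frac{1}{\beta_1^2}$, so the inequality is preserved) to obtain
\[
\frac{1}{\beta_1^2}\mathcal{E}_{\alpha,\beta}[\mu] \;\le\; \frac{1}{2\beta_1^2}\frac{n}{n+1}\left[\frac{1}{\alpha}-\frac{1}{\beta_1}\right],
\]
and concatenate the three inequalities to conclude.

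There is no real obstacle here beyond careful sign bookkeeping; the main conceptual content is already absorbed into Lemma~\ref{lem - diff ineq}, which pays the cost of discarding the favourable $(\log(|x-y|^\beta))^2 - 2\log(|x-y|^\beta) + 1 \ge 0$ term. The only thing to be careful about is that each of the two steps after Lemma~\ref{lem - diff ineq} relies on $\mathcal{E}_{\alpha,\beta}[\mu]$ being non-positive, which is why the hypothesis is phrased as an upper bound by the (negative) simplex-type energy $\frac{1}{2}\frac{n}{n+1}\bigl[\frac{1}{\alpha}-\frac{1}{\beta_1}\bigr]$ rather than an arbitrary constant. Downstream, this uniform bound will combine with Lemma~\ref{lem - inf str cvx} to upgrade the pointwise bound on $\frac{d^2}{d\beta^2}\mathcal{E}_{\alpha,\beta}[\mu]$ into strong concavity of $E_\alpha$ on $[\beta_0,\beta_1]$, yielding Proposition~\ref{prop - sc bound}.
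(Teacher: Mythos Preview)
Your proof is correct and follows essentially the same route as the paper: apply Lemma~\ref{lem - diff ineq}, then combine the hypothesis with the fact that $\frac{1}{\alpha}-\frac{1}{\beta_1}\le 0$ to pass from $\frac{1}{\beta^2}$ to $\frac{1}{\beta_1^2}$. The only cosmetic difference is the order in which you apply the energy bound and the $\beta\mapsto\beta_1$ swap; the paper first multiplies the hypothesis by $1/\beta^2$ and then observes that $\frac{1}{2\beta^2}\frac{n}{n+1}\bigl[\frac{1}{\alpha}-\frac{1}{\beta_1}\bigr]$ is increasing in $\beta$, whereas you first replace $1/\beta^2$ by $1/\beta_1^2$ using non-positivity of the energy and then invoke the hypothesis.
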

\begin{proof}
    By Lemma \ref{lem - diff ineq} and the assumption, we have that $$\frac{d^2}{d\beta^2}\mathcal{E}_{\alpha,\beta}[\mu]\le \frac{1}{\beta^2}\mathcal{E}_{\alpha,\beta}[\mu]\le \frac{1}{2\beta^2}\frac{n}{n+1}\left[\frac{1}{\alpha}-\frac{1}{\beta_1}\right].$$
    Since the rightmost term in this inequality is an increasing function of $\beta$ on $[0,\alpha],$ we conclude that
    $$\frac{d^2}{d\beta^2}\mathcal{E}_{\alpha,\beta}[\mu]\le \frac{1}{2\beta^2}\frac{n}{n+1}\left[\frac{1}{\alpha}-\frac{1}{\beta}\right]\le \frac{1}{2\beta_1^2}\frac{n}{n+1}\left[\frac{1}{\alpha}-\frac{1}{\beta_1}\right],$$
    as desired. 
\end{proof} 

We conclude by proving Proposition \ref{prop - sc bound}, which is based on the strong concavity of $E_\alpha.$
\begin{proof}[Proof of Proposition \ref{prop - sc bound}]
    Define $\mathcal{F}:=\left\{\mu\in\mathcal{P}_c(\mathbb{R}^n)\ |\ \mathcal{E}_{\alpha,\beta}[\mu]\le \frac{1}{2}\frac{n}{n+1}\left[\frac{1}{\alpha}-\frac{1}{\beta_1}\right]\right\}.$ Note that $\mathcal{F}$ is non-empty because it contains a regular unit simplex $\nu,$ as 
    $$\mathcal{E}_{\alpha,\beta}[\nu]=\frac{1}{2}\frac{n}{n+1}\left[\frac{1}{\alpha}-\frac{1}{\beta}\right]\le \frac{1}{2}\frac{n}{n+1}\left[\frac{1}{\alpha}-\frac{1}{\beta_1}\right].$$
    Thus, as an immediate consequence of the defintion of $\mathcal{F},$ we have that 
    $E^\mathcal{F}_\alpha(\beta)=E_\alpha(\beta)$
    for all $\beta\in[\beta_0,\beta_1].$ In addition, by Lemma \ref{lem - unif bd}, we see that for each $\mu\in\mathcal{F},$ $\mathcal{E}_{\alpha,\beta}[\mu]$ is strongly concave, with concavity parameter $\frac{1}{2}\frac{n}{n+1}\left[\frac{1}{\alpha}-\frac{1}{\beta_1}\right].$ Thus, by Lemma \ref{lem - inf str cvx}, we conclude that $E_\alpha(\beta)=E_\alpha^\mathcal{F}(\beta)=\inf_{\mu\in\mathcal{F}}\mathcal{E}_{\alpha,\beta}[\mu]$ is strongly concave with the correct concavity parameter. 
\end{proof}

\begin{remark}
    We note that the results of Proposition \ref{prop - sc bound} can be applied to any of the cases described in \eqref{rem - concavity cases}, although if $\alpha=\beta_1,$ then some more work is required to obtain bounds which are better than those described in Lemma \ref{lem - linear interpolation}. In particular, we can (1) fix $\beta_1'\in (\beta_0,\beta_1)$ and use \ref{prop - sc bound} to estimate $E_\alpha(\beta_1')$ from below and (2) use \eqref{eqn - strong convexity estimate} to estimate $E_\alpha(\beta_t)$ in terms of $E_\alpha(\beta_0)$ and $E_\alpha(\beta_1'),$ then replace $E_\alpha(\beta_1')$ with our lower bound. 
\end{remark}

\section{Cascading Phase Transitions}\label{sec-transition}
For the purposes of completeness, we prove the following technical lemma, since we were unable to find a reference:
\begin{lemma}[Compactness of $\mathcal{D}^k$]\label{lem-weak compact}
    Let $\Omega\subseteq\mathbb{R}^n$ be compact. Then for any $k\in\mathbb{N}$, the set $\mathcal{D}^k(\Omega)$ is weakly compact (in duality with $\mathcal{C}^b(\Omega)).$
\end{lemma}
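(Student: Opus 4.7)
The plan is to exhibit $\mathcal{D}^k(\Omega)$ as the continuous image of a compact set. Every element of $\mathcal{D}^k(\Omega)$ can be written (non-uniquely) as a convex combination of at most $k$ Dirac masses, so I would define the parametrization map
\begin{equation*}
\Phi: \Delta_{k-1}\times \Omega^k \to \mathcal{P}(\Omega),\qquad \Phi(\lambda, x) := \sum_{i=1}^k \lambda_i \delta_{x_i},
\end{equation*}
where $\Delta_{k-1} := \{\lambda\in[0,1]^k : \sum_i \lambda_i = 1\}$ is the standard simplex. Since coefficients $\lambda_i$ are allowed to vanish and the points $x_i$ need not be distinct, the image of $\Phi$ is exactly $\mathcal{D}^k(\Omega)$.

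First I would verify that $\Phi$ is continuous when $\mathcal{P}(\Omega)$ is endowed with the weak topology induced by $\mathcal{C}^b(\Omega)$. This is immediate: for any $f\in \mathcal{C}^b(\Omega)$,
\begin{equation*}
\int_\Omega f\, d\Phi(\lambda,x) = \sum_{i=1}^k \lambda_i f(x_i),
\end{equation*}
which is jointly continuous in $(\lambda, x)\in \Delta_{k-1}\times \Omega^k$. Hence $\Phi$ is continuous into the weak topology.

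Next I would observe that $\Delta_{k-1}\times \Omega^k$ is compact as a finite product of compact sets ($\Omega$ is compact by assumption). The continuous image of a compact set is compact, so $\mathcal{D}^k(\Omega)=\Phi(\Delta_{k-1}\times \Omega^k)$ is weakly compact, as desired.

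There is no real obstacle here; the only minor subtlety is recognizing that permitting zero weights and repeated points lets the fixed-cardinality parametrization cover all measures supported on \emph{at most} $k$ points (including those with strictly fewer than $k$ atoms), which is exactly the definition of $\mathcal{D}^k(\Omega)$.
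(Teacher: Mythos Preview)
Your proof is correct, but it takes a genuinely different route from the paper. The paper argues that $\mathcal{D}^k(\Omega)$ is a \emph{closed} subset of the weakly compact space $\mathcal{P}(\Omega)$: given a sequence $\mu_m\in\mathcal{D}^k(\Omega)$ with $\mu_m\rightharpoonup\mu_\infty$, it assumes for contradiction that $\mu_\infty$ has $k+1$ distinct support points, places disjoint bump functions around them, and concludes that for large $m$ each bump must receive positive $\mu_m$-mass, forcing $|\operatorname{spt}\mu_m|\ge k+1$. Your argument instead realizes $\mathcal{D}^k(\Omega)$ directly as the continuous image of the compact parameter space $\Delta_{k-1}\times\Omega^k$, bypassing both the separate appeal to compactness of $\mathcal{P}(\Omega)$ and the contradiction step. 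Your route is cleaner and more self-contained for the compact-$\Omega$ setting; the paper's route has the mild advantage that its closedness argument does not use compactness of $\Omega$ and so isolates the fact that $\mathcal{D}^k(\Omega)$ is weakly closed in $\mathcal{P}(\Omega)$ in greater generality.
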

\begin{proof}
    It is not difficult to check that, since $\Omega$ is compact, $\mathcal{P}(\Omega)$ is also compact with respect to the weak topology (see, for example, \cite[Proposition 7.2.1]{Parthasarathy}). Thus, we need only to check that $\mathcal{D}^k(\Omega)$ is weakly closed. To this end, assume that we have a sequence $\{\mu_m\}_m\subseteq \mathcal{D}^k(\Omega)$ such that $\mu_m\rightharpoonup\mu_\infty\in\mathcal{P}(\Omega).$ 
    Assume by way of contradiction that $\mu_\infty\notin \mathcal{D}^k(\Omega),$ so that there are $k+1$ distinct points $x_1,\dots,x_{k+1}$ in $\operatorname{spt}(\mu_\infty),$ and choose $\varepsilon=\frac{1}{3}\min\{|x_i-x_j|\ |\ i\ne j\}.$ 
    Moreover, for $i\in\{1,\dots,k+1\},$ let $\eta_i$ be a (smooth and non-negative) bump function such that $\eta_i(x_i)=1$ and $\operatorname{spt}(\eta_i)\subseteq B_\varepsilon(x_i).$ 
    For any given $i\in\{1,\dots,k+1\},$ the assumption that $x_i\in\operatorname{spt}(\mu_\infty),$ we have that $\int \eta_i\ d\mu_\infty>0.$ Moreover, since $\mu_m\rightharpoonup \mu_\infty,$ we must also have that, for sufficiently large $m,$ $\int \eta_i\ d\mu_m>0$ and hence that $B_\varepsilon(x_i)\cap \operatorname{spt}(\mu_m)\ne\varnothing.$Thus, by choosing $m$ large enough, we have that $B_\varepsilon(x_i)\cap \operatorname{spt}(\mu_m)\ne \varnothing$ for all $i\in\{1,\dots,k+1\}.$ 
    As $\varepsilon$ was chosen to make the balls $B_\varepsilon(x_i)$ disjoint, we deduce that $|\operatorname{spt}(\mu_m)|\ge k$ for $m$ sufficiently large enough, a contradiction to the assumption that $\{\mu_m\}_m\subseteq \mathcal{D}^k(\Omega).$
    Thus, we conclude that $\mu_\infty\in\mathcal{D}^k(\Omega)$ and hence that $\mathcal{D}^k(\Omega)$ is weakly compact. 
\end{proof}

This observation, along with the work on cardinality \cite[Theorem 1.1]{CFP}, and the identification of explicit minimizers in \cite[Theorem 2.1(c)]{DLMSphere} and \cite[Theorem 1]{Frank}, allows us to prove Theorem \ref{thm - card limit}. In other words, we show that, for all interesting values of $\alpha,$ the cardinality of minimizers of $\mathcal{E}_{\alpha,\beta}$ tends to $\infty$ as $\beta\searrow 2.$ 
\begin{proof}[Proof of Theorem \ref{thm - card limit}]
    We first treat the $n\ge 2$ case. As such, fix $\alpha\in (2,4)$ and by way of contradiction, assume that the conclusion of the theorem does not hold. Then there exists an integer $k\in\mathbb{N}$ and a sequence $\beta_m\searrow 2$ such that, for each $m,$ $E^{\mathcal{D}^k(\mathbb{R}^n)}_\alpha(\beta_m)=E_\alpha(\beta_m)$
    so that, by the continuity we proved in Proposition \ref{prop - concave increasing}, 
    $$E_{\alpha}^{\mathcal{D}^k(\mathbb{R}^n)}(2)=\lim_{m\to\infty}E_\alpha^{\mathcal{D}^k(\mathbb{R}^n)}(\beta_m)=\lim_{m\to\infty}E_\alpha(\beta_m)=E_\alpha(2).$$
    In particular, there exists a minimizing sequence $\{\mu_i\}_i\subseteq \mathcal{D}^k(\mathbb{R}^n)$ such that $\lim_{i\to\infty}\mathcal{E}_{\alpha,2}(\mu_i)=E_\alpha^(2).$ As \cite[Proposition 2.1]{DLMSphere} implies that all minimizers of $\mathcal{E}_{\alpha,2}$ have support with diameter at most $e,$ and by the translation invariance of $\mathcal{E}_{\alpha,2},$ we may assume that $\{\mu_i\}_i\subseteq \mathcal{D}^k(\overline{B_{{e}}(0)}),$ which is weakly compact by Lemma \ref{lem-weak compact}. Thus we can find that a subsequence of $\{\mu_i\}_i$ weakly converges to some $\mu_\infty\in\mathcal{D}^k(\overline{B_{{e}}(0)}).$ In particular, since $\mathcal{E}_{\alpha,2}$ is weakly continuous (due, to, for example, \cite[Proposition 7.2]{Santambrogio}), we have that $\mu_\infty\in \mathcal{D}^k(\overline{B_{{e}}(0)})\cap \operatorname{argmin}_{\mathcal{P}(\mathbb{R}^n)}\mathcal{E}_{\alpha,2}[\cdot].$ However, this is a contradiction to \cite[Theorem 2.1(c)]{DLMSphere}, which asserts that $\operatorname{argmin}_{\mathcal{P}(\mathbb{R}^n)}\mathcal{E}_{\alpha,2}[\cdot]$ consists only of `spherical shell' measures, whose supports have infinite cardinality. 

    The proof in the case $n=1$ is nearly identical, with the only distinctions being that we must choose $\alpha\in (2,3)$ and that we instead draw a contradiction to \cite[Theorem 1]{Frank}.
\end{proof}

In light of the results in \cite{CFP}, where the authors show that for all $\alpha>\beta>2,$ any minimizer $\mu$ of $\mathcal{E}_{\alpha,\beta}$ is supported on a set of finite cardinality, Theorem \ref{thm - card limit} implies that, for any fixed $\alpha$ in the range of validity of that theorem, there are an infinite number of phase transitions as $\beta\searrow 2$, and the number of points in the support of any minimizers of $\mathcal{E}_{\alpha,\beta}$ is forced to grow to $\infty.$ 

With the goal of studying weak limits of minimizers of $\mathcal{E}_{\alpha,\beta}$ for $\beta$ near $2,$ we also prove the following technical lemma:

\begin{lemma}[Minimizing Sequences for $\mathcal{E}_{\alpha,\beta}$]\label{lem - minseq}
    Let $\alpha>\beta>0$ and let $\{\beta_m\}_m\subseteq(0,\alpha)$ with $\beta_m\to \beta.$ Moreover, take $\mu_m\in\operatorname{argmin}_{\mathcal{P}(\mathbb{R}^n)}\mathcal{E}_{\alpha,\beta_m}[\cdot].$ Then $\mu_m$ is a minimizing sequence for $\mathcal{E}_{\alpha,\beta}.$ 
\end{lemma}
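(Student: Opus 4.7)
The plan is to decompose the difference $\mathcal{E}_{\alpha,\beta}[\mu_m] - E_\alpha(\beta)$ as
\[
\left(\mathcal{E}_{\alpha,\beta}[\mu_m] - \mathcal{E}_{\alpha,\beta_m}[\mu_m]\right) + \left(E_\alpha(\beta_m) - E_\alpha(\beta)\right),
\]
using the identity $\mathcal{E}_{\alpha,\beta_m}[\mu_m] = E_\alpha(\beta_m)$ that follows from $\mu_m$ being a minimizer. The second bracketed term vanishes as $m\to\infty$ by the continuity portion of Proposition \ref{prop - concave increasing}. So the entire task is to control the first term, which amounts to showing that the energies $\mathcal{E}_{\alpha,\beta}$ and $\mathcal{E}_{\alpha,\beta_m}$ cannot differ much on $\mu_m$ itself.

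First, I would invoke the uniform diameter bound $\operatorname{diam}(\operatorname{spt}(\mu_m)) \le e^{1/\beta_m}$ from \cite[Proposition 2.1]{DLMSphere}. Since $\beta_m \to \beta > 0$, this yields a uniform radius $R := e^{2/\beta}$ (say) such that $|x-y| \le R$ for all $(x,y)\in\operatorname{spt}(\mu_m\otimes\mu_m)$ and all sufficiently large $m$. On the compact set $[\beta/2, 2\beta] \times [0, R]$, the function $(\gamma, t) \mapsto t^\gamma/\gamma$ is continuous (adopting the convention $0^\gamma/\gamma := 0$, which is compatible with continuity since $t^\gamma/\gamma \le t^{\beta/2}/(\beta/2) \to 0$ as $t\to 0^+$ uniformly in $\gamma\in[\beta/2,2\beta]$), hence uniformly continuous. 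Therefore there exists a sequence $\varepsilon_m \to 0$ such that
\[
\left|\frac{t^{\beta_m}}{\beta_m} - \frac{t^\beta}{\beta}\right| \le \varepsilon_m \quad \text{for every } t \in [0, R].
\]

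Integrating this pointwise estimate against $\mu_m \otimes \mu_m$ and noting that the $|x-y|^\alpha/\alpha$ terms cancel in the difference $\mathcal{E}_{\alpha,\beta}[\mu_m] - \mathcal{E}_{\alpha,\beta_m}[\mu_m]$, I obtain
\[
\bigl|\mathcal{E}_{\alpha,\beta}[\mu_m] - \mathcal{E}_{\alpha,\beta_m}[\mu_m]\bigr| \le \tfrac{1}{2}\varepsilon_m \to 0.
\]
Combining with $E_\alpha(\beta_m) \to E_\alpha(\beta)$ and the a priori inequality $\mathcal{E}_{\alpha,\beta}[\mu_m] \ge E_\alpha(\beta)$ gives $\mathcal{E}_{\alpha,\beta}[\mu_m] \to E_\alpha(\beta)$, which is exactly the minimizing sequence property.

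I do not anticipate any real obstacle: the continuity of $E_\alpha$ is already in hand via Proposition \ref{prop - concave increasing}, and the diameter bound from \cite{DLMSphere} turns the parameter swap $\beta_m \rightsquigarrow \beta$ into a routine uniform continuity argument on a compact set. The only minor subtlety is the behavior near $t = 0$ in the integrand, handled by the extension convention above, and verifying that the existence of $\mu_m$ is a hypothesis of the lemma rather than something to be proved.
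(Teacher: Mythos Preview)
Your proof is correct, but it takes a genuinely different route from the paper's. You argue directly: invoke the uniform diameter bound from \cite[Proposition~2.1]{DLMSphere} to trap all the $\mu_m$ on a common compact set, then use uniform continuity of $(\gamma,t)\mapsto t^\gamma/\gamma$ on a compact parameter--distance rectangle to show $\mathcal{E}_{\alpha,\beta}[\mu_m]-\mathcal{E}_{\alpha,\beta_m}[\mu_m]\to 0$. Combined with the continuity of $E_\alpha$ this gives $\mathcal{E}_{\alpha,\beta}[\mu_m]\to E_\alpha(\beta)$ for the full sequence at once. The paper instead exploits Proposition~\ref{prop - concave increasing} more heavily: for any subsequence it takes $\mathcal{F}=\{\mu_{m(\ell)}\}_\ell$, notes that $E_\alpha^{\mathcal{F}}(\beta_{m(\ell)})=E_\alpha(\beta_{m(\ell)})$, and then uses the continuity of \emph{both} $E_\alpha$ and $E_\alpha^{\mathcal{F}}$ (the latter coming from concavity over an arbitrary family) to conclude $E_\alpha^{\mathcal{F}}(\beta)=E_\alpha(\beta)$; a further subsequence is then extracted, and a subsequence-of-subsequence argument finishes. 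Your approach is more elementary and avoids the subsequence machinery entirely, at the cost of invoking the diameter bound; the paper's approach is slicker in that it recycles the abstract concavity result and never touches the potential directly.
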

\begin{proof}
    Pass to the subsequences $\{\beta_{m(\ell)}\}_\ell$ and $\{\mu_{m(\ell)}\}_\ell.$ By assumption, $E_{\alpha}(\beta_{m(\ell)})=E_\alpha^{\{\mu_{m(\ell)}\}_\ell}(\beta_{m(\ell)})$ for any $\ell\in\mathbb{N},$ so by Proposition \ref{prop - concave increasing} and the fact that $\beta_m\to \beta,$ we have that 
    $$E_{\alpha}(\beta)=\lim_{\ell\to\infty}E_{\alpha}(\beta_{m(\ell)})=\lim_{\ell\to\infty}E_\alpha^{\{\mu_{m(\ell)}\}_\ell}(\beta_{m(\ell)})=E_\alpha^{\{\mu_{m(\ell)}\}_\ell}(\beta).$$
    In other words, $\inf_{\ell\in\mathbb{N}}\mathcal{E}_{\alpha,\beta}[\mu_{m(\ell)}]=\inf_{\mu\in\mathcal{P}(\mathbb{R}^n)}\mathcal{E}_{\alpha,\beta}[\mu_{m(\ell)}].$ Thus, there exists a further subsequence of $\{\mu_{m(\ell)}\}$ which is a minimizing sequence for $\mathcal{E}_{\alpha,\beta},$ and hence $\mu_m$ itself is a minimizing sequence for $\mathcal{E}_{\alpha,\beta}.$
\end{proof}

By taking the minimizing sequence in Theorem \ref{thm - card limit} to be of the form addressed in Lemma \ref{lem - minseq}, we deduce the following result which, in a relaxed sense which will be made precise in the statement of the theorem, states that minimizers of $\mathcal{E}_{\alpha,\beta},$ which are known to be discrete by \cite{CFP}, converge weakly to spherical shells as $\beta \searrow 2:$

\begin{theorem}[Weak Convergence to Spherical Shells]\label{thm - weak shells}
     Let $n\ge 2,$  and fix $\alpha\in (2,4).$ Let $\{\beta_m\}_m\subseteq (2,\alpha)$ be such that $\beta_m\searrow 2,$ and define a sequence $\{\mu_m\}_m\subseteq\mathcal{P}(\mathbb{R}^n)$ by $\mu_m\in \operatorname{argmin}_{\mathcal{P}(\mathbb{R}^n)}\mathcal{E}_{\alpha,\beta_m}.$ Then every subsequence of $\{\mu_m\}_m$ has a further subsequence which weakly converges to some spherical shell. 
\end{theorem}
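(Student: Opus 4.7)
The plan is to combine three ingredients already developed in the paper: the fact that $\{\mu_m\}$ is an approximate minimizing sequence for $\mathcal{E}_{\alpha,2}$ (Lemma \ref{lem - minseq}), the uniform diameter bound coming from \cite[Proposition 2.1]{DLMSphere}, and the classification result \cite[Theorem 2.1(c)]{DLMSphere} which forces any minimizer of $\mathcal{E}_{\alpha,2}$ in this range of $\alpha$ to be a spherical shell. Granted these ingredients, the argument reduces to a standard direct-method compactness/lower-semicontinuity chase, essentially identical in structure to the contradiction argument already used in the proof of Theorem \ref{thm - card limit}.

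First, I would observe that by Lemma \ref{lem - minseq} (applied with $\beta=2$ and $\beta_m \searrow 2$), the sequence $\{\mu_m\}$ is a minimizing sequence for $\mathcal{E}_{\alpha,2}$. Next, since $\mathcal{E}_{\alpha,\beta_m}$ and $\mathcal{E}_{\alpha,2}$ are translation invariant, and each $\mu_m$ is a genuine minimizer of $\mathcal{E}_{\alpha,\beta_m}$ in $\mathcal{P}(\mathbb{R}^n)$, I may replace $\mu_m$ by a translate so that its support lies in $\overline{B_{e^{1/\beta_m}}(0)} \subseteq \overline{B_{\sqrt{e}}(0)}$, using the support diameter bound from \cite[Proposition 2.1]{DLMSphere}. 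Thus the translated sequence lives inside a single fixed compact set, giving tightness.

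Now fix any subsequence $\{\mu_{m(k)}\}_k$. By the Banach--Alaoglu/Prokhorov type argument (weak compactness of $\mathcal{P}(\overline{B_{\sqrt{e}}(0)})$, cf.\ the proof of Lemma \ref{lem-weak compact}), I can extract a further subsequence, which I continue to denote $\{\mu_{m(k)}\}_k$, converging weakly to some $\mu_\infty \in \mathcal{P}(\overline{B_{\sqrt{e}}(0)})$. The key step is then to upgrade this to: $\mu_\infty$ is a minimizer of $\mathcal{E}_{\alpha,2}$. This follows from weak continuity of $\mathcal{E}_{\alpha,2}$ on measures supported in a fixed compact set (invoked, as in the proof of Theorem \ref{thm - card limit}, via \cite[Proposition 7.2]{Santambrogio}): combining weak continuity with the fact that $\{\mu_m\}$ is a minimizing sequence yields $\mathcal{E}_{\alpha,2}[\mu_\infty]=E_\alpha(2)$. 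Finally, since $n\ge 2$ and $\alpha\in(2,4)$, the classification \cite[Theorem 2.1(c)]{DLMSphere} identifies every minimizer of $\mathcal{E}_{\alpha,2}$ as a spherical shell of the appropriate radius, hence $\mu_\infty$ is a spherical shell.

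The main obstacle I anticipate is the translation invariance: without first normalizing each $\mu_m$ via a translation, Prokhorov tightness fails and the weak limit could be the zero measure, so care is needed to state the conclusion up to translation (consistent with the paper's convention). A secondary, minor technical point is justifying weak continuity of $\mathcal{E}_{\alpha,2}$ uniformly along a sequence all supported in $\overline{B_{\sqrt{e}}(0)}$: because the kernel $W_{\alpha,2}$ is continuous and bounded on this compact set, the product measures $\mu_m\otimes\mu_m$ converge weakly to $\mu_\infty\otimes\mu_\infty$, and integration against the continuous bounded test function $W_{\alpha,2}$ passes to the limit. Once these are handled, the theorem follows in a single line from the three ingredients above.
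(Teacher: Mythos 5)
Your proposal is correct and follows essentially the same route as the paper's own proof: normalize by translation using the diameter bound of \cite[Proposition 2.1]{DLMSphere}, apply Lemma \ref{lem - minseq} to get a minimizing sequence for $\mathcal{E}_{\alpha,2}$, extract a weak limit by compactness of $\mathcal{P}$ of a fixed ball, pass to the limit by weak continuity of the energy, and conclude via the classification in \cite[Theorem 2.1(c)]{DLMSphere}. The only (harmless) difference is your sharper containing ball of radius $\sqrt{e}$ versus the paper's radius $e$.
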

\begin{proof}
    We first address the $n\ge 2$ case. Fix a subsequence $\{\mu_{m(\ell)}\}_\ell$ of $\{\mu_m\}_m.$ As in the proof of Theorem \ref{thm - card limit}, we note that \cite[Proposition 2.1]{DLMSphere} and translation invariance allow us to assume that $\{\mu_{m(\ell)}\}_\ell\subseteq\mathcal{P}(\overline{B_e(0)}).$ By Lemma \ref{lem - minseq}, $\{\mu_{m(\ell)}\}_\ell$ is a minimizing sequence for $\mathcal{E}_{\alpha,2}.$ Hence, since $\mathcal{P}(\overline{B_e(0)})$ is weakly compact and $\mathcal{E}_{\alpha,2}$ is weakly continuous, there exists a further subsequence of $\{\mu_{m(\ell)}\}_\ell$ which converges to $\mu_\infty\in\operatorname{argmin}_{\mathcal{P}(\mathbb{R}^d)}\mathcal{E}_{\alpha,2}.$ Finally, by \cite[Theorem 2.1(c)]{DLMSphere}, $\mu_\infty$ is a spherical shell measure, as desired. 

    The proof of the $n=1$ case is nearly identical, with the only difference being that we use \cite[Theorem 1]{Frank} to conclude that $\mu_\infty$ is of the form described in that theorem. 
\end{proof}

A related result holds in the one-dimensional case, where minimizers of $\mathcal{E}_{\alpha,2}$ for $\alpha\in (2,3)$ are known by \cite[Theorem 1]{Frank}:

\begin{proposition}[Weak Convergence in One Dimension]\label{prop - weak 1d}
    Let $n=1$ and $\alpha\in (2,3).$ Let $\{\beta_m\}_m\in (1,\alpha)$ be such that $\beta_m\to 2,$ and define a sequence $\{\mu_m\}_m\subseteq\mathcal{P}(\mathbb{R}^n)$ by $\mu_m\in \operatorname{argmin}_{\mathcal{P}(\mathbb{R}^n)}\mathcal{E}_{\alpha,\beta_m}.$ Then every subsequence of $\{\mu_m\}_m$ converges weakly to a measure with density as described in \cite[Theorem 1]{Frank}. 
\end{proposition}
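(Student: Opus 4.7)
The plan is to mimic the argument used for Theorem \ref{thm - weak shells}, making only the modifications required to handle the one-dimensional case. The structural ingredients are all in place: a uniform diameter bound on minimizers, weak compactness of probability measures on a compact set, weak continuity of $\mathcal{E}_{\alpha,2}$, Lemma \ref{lem - minseq} to turn minimizers of the perturbed problem into a minimizing sequence for the limit problem, and an explicit description of $\operatorname{argmin}\mathcal{E}_{\alpha,2}$ at the limit (this time from \cite[Theorem 1]{Frank} rather than \cite[Theorem 2.1(c)]{DLMSphere}).

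First I would fix an arbitrary subsequence $\{\mu_{m(\ell)}\}_\ell$ of the minimizer sequence. By \cite[Proposition 2.1]{DLMSphere} applied for each $\beta_m > 1$, the support of each $\mu_{m(\ell)}$ has diameter at most $e^{1/\beta_{m(\ell)}} \le e$, and translation invariance of $\mathcal{E}_{\alpha,\beta}$ allows me to assume, after translating each $\mu_{m(\ell)}$, that $\operatorname{spt}(\mu_{m(\ell)}) \subseteq \overline{B_e(0)} \subset \mathbb{R}$. Then $\{\mu_{m(\ell)}\}_\ell \subseteq \mathcal{P}(\overline{B_e(0)})$, which is weakly compact by standard Prokhorov-type arguments.

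Next, I would apply Lemma \ref{lem - minseq} with the limiting parameter $\beta = 2$ to conclude that $\{\mu_{m(\ell)}\}_\ell$ is a minimizing sequence for $\mathcal{E}_{\alpha,2}$. Weak compactness gives a further subsequence converging weakly to some $\mu_\infty \in \mathcal{P}(\overline{B_e(0)})$, and weak continuity of $\mathcal{E}_{\alpha,2}$ (for example as in \cite[Proposition 7.2]{Santambrogio}) forces $\mu_\infty \in \operatorname{argmin}_{\mathcal{P}(\mathbb{R})} \mathcal{E}_{\alpha,2}$. The final input is \cite[Theorem 1]{Frank}, which characterizes that argmin (for $n=1$ and $\alpha \in (2,3)$) as the translates of a single explicit density on an interval; thus $\mu_\infty$ has the required form, and since it is the unique such minimizer once translation is fixed, every subsequence must deliver the same weak limit.

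I do not anticipate a genuine obstacle: every step is already carried out verbatim for the spherical shell case, and the only substantive difference is the invocation of \cite[Theorem 1]{Frank} in place of \cite[Theorem 2.1(c)]{DLMSphere}. The one point requiring a moment's care is the translation normalization, since Frank's minimizer is unique only up to translation and reflection; to make the statement "every subsequence converges weakly" literally true one fixes, say, the centre of mass of each $\mu_{m(\ell)}$ at the origin, after which any weakly convergent subsequence has the same limit and the entire sequence converges. If uniqueness of the centred limit fails to be part of Frank's statement, one instead reads the conclusion in the subsequential sense of Theorem \ref{thm - weak shells}, which is what the argument above literally produces.
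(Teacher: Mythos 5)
Your proposal is correct and follows essentially the same route as the paper, which likewise reduces to the argument of Theorem \ref{thm - weak shells} (diameter bound from \cite[Proposition 2.1]{DLMSphere}, translation normalization, weak compactness, Lemma \ref{lem - minseq}, weak continuity) and then invokes \cite[Theorem 1]{Frank} to classify the limit. Your closing caveat about translations/reflections and reading the conclusion subsequentially is a fair observation, since the paper's own proof also only produces the subsequential statement.
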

\begin{proof}
    For any subsequence of $\{\mu_m\}_m,$ we use the same methods as in the proof of Theorem \ref{thm - weak shells} to recover a further subsequence which tends to $\mu_\infty\in\operatorname{argmin}_{\mathcal{P}(\mathbb{R}^d)}\mathcal{E}_{\alpha,2},$ and such minimizers are totally classified by \cite[Theorem 1]{Frank}.
\end{proof}

A second related result holds for $\alpha>4$ (if $n\ge 2$) and $\alpha>3$ (if $n\ge 1$), which states that as $\beta\nearrow 2,$ minimizers of $\mathcal{E}_{\alpha,\beta},$ whose supports necessarily have positive Hausdorff dimension by \cite[Theorem 1]{BCLR}, converge to unit simplices studied in \cite{DLMSimplex} in the following sense:

\begin{theorem}[Weak Convergence to Unit Simplices]\label{thm - weak simplices}
    For $n=1$ fix $\alpha\ge3$, and for $n\ge 2,$ fix $\alpha>4.$ Let $\{\beta_m\}_m\subseteq (1,2)$ be such that $\beta_m\nearrow 2$ and define a sequence $\{\mu_m\}_m\subseteq\mathcal{P}(\mathbb{R}^n)$ by $\mu_m\in \operatorname{argmin}_{\mathcal{P}(\mathbb{R}^n)}\mathcal{E}_{\alpha,\beta_m}.$ Then every subsequence of $\{\mu_m\}_m$ has a further subsequence which weakly converges to some regular unit simplex.
\end{theorem}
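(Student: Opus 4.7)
The plan is to mirror the proof of Theorem \ref{thm - weak shells} almost verbatim, exchanging the spherical-shell classification of $\mathcal{E}_{\alpha,2}$-minimizers for the unit-simplex classification that holds when $\alpha$ is large enough. First I would establish a uniform compactness of supports: each $\mu_m$ minimizes $\mathcal{E}_{\alpha,\beta_m}$, so by \cite[Proposition 2.1]{DLMSphere}, $\operatorname{diam}(\operatorname{spt}(\mu_m)) \leq e^{1/\beta_m}$; since $\beta_m \nearrow 2$, eventually $\beta_m \geq 3/2$ and $e^{1/\beta_m}$ is uniformly bounded. Using translation invariance of $\mathcal{E}_{\alpha,\beta_m}$, I may therefore assume that $\{\mu_m\}_m \subseteq \mathcal{P}(\overline{B_e(0)})$.

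Next, I would fix an arbitrary subsequence $\{\mu_{m(\ell)}\}_\ell$ and apply Lemma \ref{lem - minseq} (with target $\beta = 2$) to conclude that $\{\mu_{m(\ell)}\}_\ell$ is a minimizing sequence for $\mathcal{E}_{\alpha,2}$. Weak compactness of $\mathcal{P}(\overline{B_e(0)})$ provides a further weakly convergent subsequence with some limit $\mu_\infty \in \mathcal{P}(\overline{B_e(0)})$, and weak continuity of $\mathcal{E}_{\alpha,2}$ (see \cite[Proposition 7.2]{Santambrogio}) forces $\mu_\infty \in \operatorname{argmin}_{\mathcal{P}(\mathbb{R}^n)} \mathcal{E}_{\alpha,2}[\cdot]$.

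Finally, I would invoke the classification of $\mathcal{E}_{\alpha,2}$-minimizers recalled in the introduction to identify $\mu_\infty$ as a unit simplex. When $n = 1$ and $\alpha \geq 3$, the first bullet of that theorem gives uniqueness of the unit-simplex minimizer directly. When $n \geq 2$ and $\alpha > 4$, the simplex transition threshold satisfies $\alpha_{\Delta^n}(2) \in [\beta_n, 4]$, so $\alpha > 4 \geq \alpha_{\Delta^n}(2)$ and the fourth bullet likewise yields that the unit simplex is the unique minimizer of $\mathcal{E}_{\alpha,2}$. The main (and essentially only) subtlety is verifying that the strict inequality $\alpha > 4$ places us to the right of the transition threshold at $\beta = 2$, so that at the limit the discrete simplex configuration---rather than a spherical shell or some intermediate measure---is singled out.
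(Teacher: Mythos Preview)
Your proposal is correct and follows essentially the same approach as the paper: the paper explicitly says the argument is ``identical to that in the proof of Theorem \ref{thm - weak shells}'' to produce a subsequential limit $\mu_\infty\in\operatorname{argmin}\mathcal{E}_{\alpha,2}$, then invokes \cite[Corollary 1.4]{DLMSimplex} (for $n\ge 2$, $\alpha>4$) and \cite[Corollary 2.3(c--d)]{DLMSphere} (for $n=1$, $\alpha\ge 3$) to identify $\mu_\infty$ as a unit simplex. Your route through the threshold inequality $\alpha>4\ge\alpha_{\Delta^n}(2)$ recovers the same conclusion from the summary theorem in the introduction.
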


\begin{proof}
    By an argument identical to that the proof of Theorem \ref{thm - weak shells}, every subsequence of $\{\mu_m\}_m$ has a further subsequence converging to $\mu_{\infty}\in\operatorname{argmin}_{\mathcal{P}(\mathbb{R}^d)}\mathcal{E}_{\alpha,2}.$ If $n\ge 2$ and $\alpha>4,$ then by \cite[Corollary 1.4]{DLMSimplex}, $\mu_\infty$ is a unit simplex, as desired. If, instead, $n=1$ and $\alpha\ge 3,$ then \cite[Corollary 2.3(c-d)]{DLMSphere} implies that $\mu_\infty$ is a unit simplex. 
\end{proof}

\begin{remark}
    We note that Theorem \ref{thm - weak shells}, Proposition \ref{prop - weak 1d}, and Theorem \ref{thm - weak simplices} address all cases where global minimizers of $\mathcal{E}_{\alpha,\beta}$ are not explicitly known for $\alpha>\beta\approx 2.$  In particular, if $n\ge 2$, $\alpha\in [2,4],$ and $\frac{-n^2+3n+2}{n+1}\le \beta<2,$ then spherical shells are the only global minimizers of $\mathcal{E}_{4,\beta}$ per \cite[Theorem 1]{FM}. If $(\alpha,\beta)=(4,2)$, then global minimizers of $\mathcal{E}_{4,2}$ are precisely those probability measures which concentrate on appropriate spherical shells and which satisfy the second-moment condition of \cite[Theorem 1]{DLMSimplex}. Finally, $n\ge 2$ and $\alpha>4$, or if $n=1$ and $\alpha\ge 3,$ then for $2<\beta<\alpha,$ regular unit simplex measures are the only global minimizers of $\mathcal{E}_{4,\beta},$ this time by \cite[Theorem 1.2]{DLMSimplex}  and either \cite[Theorem 1.1]{DLMSimplex}(in the $n\ge 2$ case) or \cite[Corollary 2.3(d)]{DLMSphere} (in the $n=1$ case). 
\end{remark}

\section{Bound on Simplex Transition Threshold}\label{sec-threshold}

We conclude by presenting the last contribution in our paper, which concerns the transition threshold function $\alpha_{\Delta^n}(\beta)$ defined in \cite[Theorem 1.5]{DLMSimplex}. Recall that, for $\beta\ge 2,$ $\alpha_{\Delta^n}(\beta)$ is defined as the unique value of $\alpha$ such that $E_{\alpha,\beta}$ is minimized by unit simplices for $\alpha>\alpha_{\Delta^n}(\beta),$ and unit simplices do not minimize $E_{\alpha,\beta}$ for $\alpha<\alpha_{\Delta^n}(\beta).$ While the value of $\alpha_{\Delta^n}(\beta)$ is only explicitly known for $\beta=2,$ the work in \cite[Section 4.1]{DLMSimplex} established an upper bound on $\alpha_{\Delta^n}(\beta)$ which only depends on the dimension through $\min(n,2)$ and \cite[Section 4.2]{DLMSimplex} established a dimensionally-dependent lower bound. In the case where $n\ge 2,$ we provide an alternative lower bound, which has two key advantages. First, the definition of the lower bound is more easily explained than the bound in \cite{DLMSimplex}, which relies on a careful analysis involving the Euler-Lagrange conditions. Second, we are able to show that our lower bound is superior, at least in some cases, and have reason to believe that this is the case in general. To begin, we recall the lower bound from \cite[Section 4]{DLMSimplex}.

\begin{definition}[Lower Bounds and Unimodal Functions from \cite{DLMSimplex}]
    Let the function $f_n:(0,\infty)\to\mathbb{R}$ be defined by 
    $$f_n(t):=\begin{cases}
\frac{2^{-1}-2^{-t}}{t} & \text{ if }n=1\\
\frac{n-(\frac{2n}{n+1})^{t/2}-n(\frac{n-1}{n+1})^{t/2}}{t} & \text{ if }n\ge 2.\end{cases}.$$
For $\beta\ge 2,$ define $\underline{\alpha}_{\Delta^n}(\beta)$ by 
$
\underline{\alpha}_{\Delta^n}(\beta)=\max\{\alpha\ge 2 \ | \  \varphi_n(\alpha)=\varphi_n(\beta)\}.$
\end{definition}
Each $\underline{\alpha}_{\Delta^n}$ was proven to be a lower bound for the threshold $\alpha_\Delta^n$ in \cite[Proposition 4.12]{DLMSimplex}. In particular, those bounds were chosen by choosing points where we felt it likely for unit simplices to violate the Euler-Lagrange conditions for a large range of parameters, and then finding the range of parameters for which there is an Euler-Lagrage violation. Our current approach, on the other hand, is based on identifying a likely family of competitor measures, and checking for which parameters these competitors have lower energy than the simplex. In light of the translation and rotation invariance of this minimization problem, we consider the cross-polytope measures of Definition \ref{def - measures}.
Of course, that definition defines a one-parameter family of measures, so it is useful to find the measure in this family with minimal energy:
\begin{proposition}[Optimal Energy for Cross Polytopes]\label{prop - cross poly energy}
    Let $n\ge 1.$ Then $\mathcal{E}_{\alpha,\beta}[\mu^*_n(r)]$ is minimized when 
    $$r=r_{\alpha,\beta,n}^*=2^{-1/2}\left(\frac{2n-2+2^{\beta/2}}{2n-2+2^{\alpha/2}}\right)^{1/(\alpha-\beta)}.$$
    and
    $$\inf_{r}\mathcal{E}_{\alpha,\beta}[\mu^*_n(r)]=\frac{1}{4n}\left(\frac{(2n-2+2^{\beta/2})^{\alpha/(\alpha-\beta)}}{(2n-2+2^{\alpha/2})^{\beta/(\alpha-\beta)}}\right)\left(\frac1\alpha-\frac1\beta\right).$$
\end{proposition}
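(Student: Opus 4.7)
The plan is to compute $\mathcal{E}_{\alpha,\beta}[\mu^*_n(r)]$ explicitly as a function of the single parameter $r$, and then optimize via one-variable calculus. Since $\mu^*_n(r)$ assigns equal mass $\tfrac{1}{2n}$ to each of the $2n$ points $\{\pm re_i\}_{i=1}^n$, the energy is a finite double sum. There are three types of ordered pairs: the $2n$ diagonal pairs, which contribute $0$ because $W_{\alpha,\beta}(0)=0$; the $2n$ antipodal pairs $(re_i,-re_i)$ and $(-re_i,re_i)$ at distance $2r$; and the remaining $4n(n-1)$ pairs, which all sit at distance $r\sqrt{2}$.

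Substituting these distances into the definition of $\mathcal{E}_{\alpha,\beta}$ and using $(2r)^\gamma = 2^{\gamma/2}\cdot 2^{\gamma/2} r^\gamma$ together with $(r\sqrt{2})^\gamma = 2^{\gamma/2} r^\gamma$, the energy factors neatly as
\begin{equation*}
\mathcal{E}_{\alpha,\beta}[\mu^*_n(r)] = \frac{1}{4n}\left[\frac{A_\alpha\, r^\alpha}{\alpha}-\frac{A_\beta\, r^\beta}{\beta}\right], \qquad A_\gamma := 2^{\gamma/2}\bigl(2n-2+2^{\gamma/2}\bigr).
\end{equation*}
Since $\alpha>\beta>0$ and $A_\alpha, A_\beta>0$, the bracketed function is negative for small $r>0$, tends to $+\infty$ as $r\to\infty$, and is smooth, so a unique positive minimizer exists. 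Setting the derivative to zero yields $A_\alpha r^{\alpha-1}=A_\beta r^{\beta-1}$, whence
\begin{equation*}
r^* = \left(\frac{A_\beta}{A_\alpha}\right)^{\frac{1}{\alpha-\beta}} = 2^{-1/2}\left(\frac{2n-2+2^{\beta/2}}{2n-2+2^{\alpha/2}}\right)^{\frac{1}{\alpha-\beta}},
\end{equation*}
which agrees with $r^*_{\alpha,\beta,n}$ in the statement.

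To extract the minimum value, I would plug $r^*$ back into the energy. The key simplification is the first-order identity $(r^*)^\alpha = (r^*)^\beta \cdot A_\beta/A_\alpha$, which collapses the bracket to $A_\beta (r^*)^\beta \bigl(\tfrac{1}{\alpha}-\tfrac{1}{\beta}\bigr)$. Writing $A_\beta = 2^{\beta/2}\bigl(2n-2+2^{\beta/2}\bigr)$ and $(r^*)^\beta = 2^{-\beta/2}\bigl((2n-2+2^{\beta/2})/(2n-2+2^{\alpha/2})\bigr)^{\beta/(\alpha-\beta)}$, the factors of $2^{\pm\beta/2}$ cancel, and the exponent identity $1+\tfrac{\beta}{\alpha-\beta}=\tfrac{\alpha}{\alpha-\beta}$ reduces everything to the claimed closed form.

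I do not anticipate a serious obstacle: the proof is a direct computation, and the essential point is that restricting the energy to this one-parameter family produces a two-term power function of $r$, whose minimization is elementary. The only bookkeeping requiring care is correctly counting the ordered pairs at each of the two nonzero distances and tracking the exponents through the final substitution.
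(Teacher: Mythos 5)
Your proposal is correct and follows essentially the same route as the paper: write $\mathcal{E}_{\alpha,\beta}[\mu_n^*(r)]$ as a two-term power function of $r$, set the derivative to zero, and substitute the critical $r$ back using the first-order identity. The only differences are cosmetic --- you spell out the pair-counting behind the coefficient $A_\gamma=2^{\gamma/2}(2n-2+2^{\gamma/2})$ (which the paper states without derivation) and you explicitly verify that the unique critical point is the global minimizer, a point the paper leaves implicit.
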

\begin{proof}
    For any $r\ge 0,$ we compute 
    \begin{equation}\label{eqn - r energy}
        \mathcal{E}_{\alpha,\beta}[\mu_n^*(r)]=\frac{1}{4n}\left[\left((2n-2)2^{\alpha/2}+2^\alpha\right)\frac{r^\alpha}{\alpha}-\left((2n-2)2^{\beta/2}+2^\beta\right)\frac{r^\beta}{\beta}\right],
    \end{equation}
    so that 
    $$\frac{d}{dr}\mathcal{E}_{\alpha,\beta}[\mu_n^*(r)]=\frac{1}{4n}\left[\left((2n-2)2^{\alpha/2}+2^\alpha\right)r^{\alpha-1}-\left((2n-2)2^{\beta/2}+2^\beta\right)r^{\beta-1}\right].$$
    Solving, we see that $\frac{d}{dr}\mathcal{E}_{\alpha,\beta}[\mu_n^*(r)]=0$ only for $r$ satisfying 
    $$r^{\alpha-\beta}=\frac{(2n-2)2^{\beta/2}+2^\beta}{(2n-2)2^{\alpha/2}+2^\alpha}=2^{(\beta-\alpha)/2}\frac{2n-2+2^{\beta/2}}{2n-2+2{\alpha/2}},$$
    and solving for $r,$ we deduce that 
    $$r=r^*_{\alpha,\beta,n}=2^{-1/2}\left(\frac{2n-2+2^{\beta/2}}{2n-2+2^{\alpha/2}}\right)^{1/(\alpha-\beta)}.$$
    Substituting this in to \eqref{eqn - r energy}, after factoring out $r^\beta$, implies that
    \begin{align*}\mathcal{E}_{\alpha,\beta}[\mu_n^*(r^*_{\alpha,\beta,n})] & =\frac{1}{4n}2^{-\beta/2}\left(\frac{2n-2+2^{\beta/2}}{2n-2+2^{\alpha/2}}\right)^{\beta/(\alpha-\beta)}((2n-2)2^{\beta/2}+2^\beta)\left(\frac{1}{\alpha}-\frac{1}{\beta}\right)\\
    & = \frac{1}{4n}\left(\frac{(2n-2+2^{\beta/2})^{\alpha/(\alpha-\beta)}}{(2n-2+2^{\alpha/2})^{\beta/(\alpha-\beta)}}\right)\left(\frac1\alpha-\frac1\beta\right),\end{align*}
    as desired. 
\end{proof}

We will be interested in studying the relationship between $\mathcal{E}_{\alpha,\beta}[\mu_n^*(r^*_{\alpha,\beta,n})]$ and $\mathcal{E}_{\alpha,\beta}[\mu_n^\Delta].$ In particular, if $\mathcal{E}_{\alpha,\beta}[\mu_n^*(r^*_{\alpha,\beta,n})]<\mathcal{E}_{\alpha,\beta}[\mu_n^\Delta],$ the optimal cross-polytope has lower interaction energy than any unit simplex, so unit simplices cannot globally minimize $\mathcal{E}_{\alpha,\beta},$ so that $\alpha_{\Delta^n}(\beta)\ge \alpha.$ To analyze when this occurs, we will define a family of unimodal functions, analogously to the approach used to find bounds in \cite[Section 4]{DLMSimplex}.
\begin{definition}[A New Family of Unimodal Functions]\label{def - unimodal}
    For $n\ge 1$, define $\varphi_n:(0,\infty)\to (-\infty,0)$ by $$\varphi_n(\gamma):=-\left(\frac{(n+1)(2n-2+2^{\gamma/2})}{2n^2}\right)^{1/\gamma}.$$
\end{definition}
We note that, if $n=1,$ then $f_1\equiv-\sqrt{2}$ which, as will become clear in the following discussion, is a consequence of the fact that the optimal cross-polytope and the unit simplex coincide in the one-dimensional case. We now state a couple of relevant properties of the $\varphi_n.$
\begin{lemma}[Unimodality of $\mathcal{\varphi}_n$]\label{lem - unimodality of phi_n}
    Let $n\ge 2.$ Then $\varphi_n$ is a unimodal function, in the sense that it has a unique global maximum and no other critical points. Moreover, if $t_0\in (1+\frac{n-1}{2n^2},\infty)$ is defined as the unique solution of $$\left(\frac{2n^2}{n+1}t_0-(2n-2)\right)\log\left(\frac{2n^2}{n+1}t_0-(2n-2)\right)=\frac{2n^2}{n+1}t_0\log t_0$$
on $(1+\frac{n-1}{2n^2},\infty)$, then $\varphi_n$ is maximized at $\gamma_0=\frac{2\log\left(\frac{2n^2}{n+1}t_0-(2n-2)\right)}{\log 2}.$
\end{lemma}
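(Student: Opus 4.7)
The plan is to reduce the analysis of $\varphi_n$ to that of the function $F(\gamma) := \log(-\varphi_n(\gamma)) = \frac{g(\gamma)}{\gamma}$, where $g(\gamma) := \log\!\left(\frac{(n+1)(2n-2+2^{\gamma/2})}{2n^2}\right)$. Since $-\varphi_n > 0$ and $\log$ is strictly increasing, critical points of $\varphi_n$ correspond to critical points of $F$, and a maximum of $\varphi_n$ corresponds to a minimum of $F$. Differentiating shows $F'(\gamma) = H(\gamma)/\gamma^2$, where $H(\gamma) := \gamma g'(\gamma) - g(\gamma)$, so the whole task reduces to locating the zeros of $H$.

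For uniqueness of the critical point I would exploit the identity $H'(\gamma) = \gamma g''(\gamma)$. A direct computation will yield
\begin{equation*}
g''(\gamma) = \frac{((\log 2)/2)^2 \cdot 2^{\gamma/2} \cdot (2n-2)}{(2n-2+2^{\gamma/2})^2},
\end{equation*}
which is strictly positive for all $\gamma > 0$ when $n \ge 2$, so $H$ is strictly increasing on $(0,\infty)$. A boundary analysis will then give $H(0^+) = -\log\!\left(\frac{(n+1)(2n-1)}{2n^2}\right) < 0$ (using $(n+1)(2n-1) > 2n^2$ for $n \ge 2$) and $\lim_{\gamma\to\infty} H(\gamma) = \log\!\left(\frac{2n^2}{n+1}\right) > 0$. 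By the intermediate value theorem, $H$ has a unique zero $\gamma_0 \in (0,\infty)$, and it changes sign from negative to positive there. Consequently $F$ is strictly decreasing on $(0,\gamma_0)$ and strictly increasing on $(\gamma_0,\infty)$, so $F$ attains a unique minimum and $\varphi_n$ a unique maximum at $\gamma_0$, which establishes unimodality.

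To recover the explicit formula for $\gamma_0$, I would next substitute $u := 2^{\gamma_0/2}$ and $t_0 := \frac{n+1}{2n^2}(u + 2n - 2)$ into the critical point equation $\gamma_0 g'(\gamma_0) = g(\gamma_0)$. After multiplying through by $2n-2+u$ and using $\gamma_0 = \frac{2\log u}{\log 2}$, the equation becomes $u\log u = (2n-2+u)\log(c(2n-2+u))$, where $c := \frac{n+1}{2n^2}$. Rewriting in terms of $t_0$ via $c(u+2n-2) = t_0$ and $u = \frac{2n^2}{n+1}t_0 - (2n-2)$, this reproduces precisely the defining relation for $t_0$ stated in the lemma. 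The bijection $\gamma \in (0,\infty) \leftrightarrow u \in (1,\infty) \leftrightarrow t \in \bigl(1 + \frac{n-1}{2n^2},\, \infty\bigr)$, combined with the uniqueness of $\gamma_0$, simultaneously shows that $t_0$ is well-defined on the stated interval and yields the claimed formula $\gamma_0 = \frac{2\log\!\left(\frac{2n^2}{n+1}t_0 - (2n-2)\right)}{\log 2}$.

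The main obstacle, I expect, will be verifying the sign of $H$ at the endpoints cleanly: the $\gamma \to \infty$ limit in particular requires a small cancellation between the dominant $\frac{\gamma \log 2}{2}$ contributions in $\gamma g'(\gamma)$ and $g(\gamma)$, which needs a careful expansion of $g'(\gamma) = \frac{(\log 2)/2}{1 + (2n-2)2^{-\gamma/2}}$ to extract the subleading constant. Once the sign of $H$ is pinned down, the unimodality follows essentially for free from $g'' > 0$, and the change of variables to $t_0$ is algebraically straightforward if bookkeeping-heavy.
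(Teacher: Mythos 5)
Your proposal is correct, and all the computations check out: $g''(\gamma)=\bigl(\tfrac{\log 2}{2}\bigr)^2\frac{2^{\gamma/2}(2n-2)}{(2n-2+2^{\gamma/2})^2}>0$, $H(0^+)=-\log\frac{(n+1)(2n-1)}{2n^2}<0$ since $(n+1)(2n-1)=2n^2+n-1>2n^2$, $\lim_{\gamma\to\infty}H(\gamma)=\log\frac{2n^2}{n+1}>0$, and the substitution $u=2^{\gamma_0/2}$, $t_0=\frac{n+1}{2n^2}(u+2n-2)$ turns $\gamma_0 g'(\gamma_0)=g(\gamma_0)$ into exactly the stated relation for $t_0$, with the interval $(1+\frac{n-1}{2n^2},\infty)$ matching the image of $u\in(1,\infty)$ because $\frac{(n+1)(2n-1)}{2n^2}=1+\frac{n-1}{2n^2}$.

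The route differs from the paper's in a meaningful way, even though both reduce unimodality to a single sign change of a derivative. The paper first conjugates $\varphi_n$ by two explicit monotone maps to obtain $-\log t/\log\bigl(\frac{2n^2}{n+1}t-(2n-2)\bigr)$ in the variable $t$, and then shows the sign-determining function $g_n(t)$ is concave with $g_n>0$ at the left endpoint; you instead work in the original variable with $F(\gamma)=\log(-\varphi_n(\gamma))=g(\gamma)/\gamma$ and show $H(\gamma)=\gamma g'(\gamma)-g(\gamma)$ is strictly increasing (via $H'=\gamma g''>0$) with a sign change detected by the two endpoint limits. In fact the two auxiliary functions coincide up to a positive factor and a sign, $g_n(t)=-(2n-2+2^{\gamma/2})H(\gamma)$ under the change of variables, so the approaches are two dressings of the same monotonicity phenomenon; what yours buys is that the limit $H(\infty)=\log\frac{2n^2}{n+1}>0$ explicitly certifies that the critical point (equivalently the solution $t_0$) exists, whereas the paper's concavity-plus-left-endpoint argument only rules out more than one sign change and leaves existence of $t_0$ implicit in the lemma's phrasing. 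Your $t$-substitution appears only at the end to match the stated form of $\gamma_0$, which keeps the analytic core in $\gamma$ and arguably makes the bookkeeping lighter.
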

\begin{proof}
    Postponed to the appendix. 
\end{proof}

\begin{lemma}[$\varphi_n$ Characterizes Competition between Simplices and Cross Polytopes]\label{lem - opp sign}
    Let $\alpha>\beta>0$, and let $n\ge 1.$ Then $\varphi_n(\alpha)-\varphi_n(\beta)$ and $\mathcal{E}_{\alpha,\beta}[\mu_n^*(r_{\alpha,\beta,n}^*)]-\mathcal{E}_{\alpha,\beta}[\mu_n^\Delta]$ have opposite sign. 
\end{lemma}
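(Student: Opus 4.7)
The plan is to reduce the claim to a single chain of if-and-only-if statements between two simple algebraic inequalities. First, I would compute the unit simplex energy directly: since the $n+1$ vertices of $\mu_n^\Delta$ are pairwise at distance $1$, a short computation gives
$$\mathcal{E}_{\alpha,\beta}[\mu_n^\Delta] = \frac{n}{2(n+1)}\left(\frac{1}{\alpha}-\frac{1}{\beta}\right).$$
Abbreviating $A := 2n-2+2^{\alpha/2}$ and $B := 2n-2+2^{\beta/2}$, I would combine this with the formula for $\mathcal{E}_{\alpha,\beta}[\mu_n^*(r^*_{\alpha,\beta,n})]$ from Proposition \ref{prop - cross poly energy} and factor out the common multiplier $\left(\frac{1}{\alpha}-\frac{1}{\beta}\right)$. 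Crucially this common factor is strictly negative since $\alpha > \beta > 0$, so the sign of the energy difference is opposite to the sign of
$$Q_1 \;:=\; \frac{B^{\alpha/(\alpha-\beta)}}{A^{\beta/(\alpha-\beta)}} - \frac{2n^2}{n+1}.$$

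Next, I would unravel the definition of $\varphi_n$: the sign of $\varphi_n(\alpha)-\varphi_n(\beta)$ equals the sign of
$$Q_2 \;:=\; \left(\frac{(n+1)B}{2n^2}\right)^{1/\beta} - \left(\frac{(n+1)A}{2n^2}\right)^{1/\alpha},$$
since $\varphi_n$ is the negative of these expressions. Both bases are strictly positive, so I can apply the monotone map $t\mapsto t^{\alpha\beta}$ (with $\alpha\beta>0$) to the two terms being compared, then rearrange by multiplying through and finally raise the result to the positive power $1/(\alpha-\beta)$. None of these operations flips the direction of an inequality, and the final form is precisely $Q_1 \gtrless 0$ with the same sign as $Q_2$. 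Combining with the first step, the sign of the energy difference is opposite to that of $\varphi_n(\alpha)-\varphi_n(\beta)$, as required.

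The only delicate point is bookkeeping the direction of the inequalities through several applications of strictly monotone power maps; the single sign reversal in the statement is supplied by the negative factor $\frac{1}{\alpha}-\frac{1}{\beta}$. As a consistency check, when $n=1$ the optimal cross-polytope (radius $r^*=1/2$) coincides with the unit simplex, so the energy difference vanishes identically; this is matched by the identity $\varphi_1\equiv -\sqrt{2}$ noted after Definition \ref{def - unimodal}, so that $\varphi_1(\alpha)-\varphi_1(\beta)=0$ as well.
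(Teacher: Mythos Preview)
Your argument is correct and follows essentially the same route as the paper: both proofs factor out the negative quantity $\tfrac{1}{\alpha}-\tfrac{1}{\beta}$ (the paper instead multiplies by the positive $4n(\tfrac{1}{\beta}-\tfrac{1}{\alpha})^{-1}$, which is the same bookkeeping) and then pass between the two expressions by applying strictly increasing power maps to the positive terms being compared. The only cosmetic difference is direction---the paper manipulates the energy difference until it becomes $-(\varphi_n(\alpha)-\varphi_n(\beta))$, whereas you start from $\varphi_n(\alpha)-\varphi_n(\beta)$ and work toward your $Q_1$---but the algebraic steps are identical.
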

\begin{proof}
Applying Proposition \ref{prop - cross poly energy} and using that the energy of the unit simplex is $\frac12\frac{n}{n+1}\left(\frac{1}{\alpha}-\frac{1}{\beta}\right),$ we see that
$$\mathcal{E}_{\alpha,\beta}[\mu_n^*(r_{\alpha,\beta,n}^*)]-\mathcal{E}_{\alpha,\beta}[\mu_n^\Delta]=\frac{1}{4n}\left(\frac{(2n-2+2^{\beta/2})^{\alpha/(\alpha-\beta)}}{(2n-2+2^{\alpha/2})^{\beta/(\alpha-\beta)}}\right)\left(\frac1\alpha-\frac1\beta\right)-\frac12\frac{n}{n+1}\left(\frac{1}{\alpha}-\frac{1}{\beta}\right).$$
Multiplying through by the positive quantity $4n\left(\frac1\beta-\frac1\alpha\right)^{-1},$ we see that this quantity has the same sign as 
$$\frac{2n^2}{n+1}-\frac{(2n-2+2^{\beta/2})^{\alpha/(\alpha-\beta)}}{(2n-2+2^{\alpha/2})^{\beta/(\alpha-\beta)}}.$$
Since the map $t\mapsto t^{\alpha-\beta}$ is injective and increasing on $(0,\infty),$
we may apply this map to both terms in the preceding line to find that 
$\mathcal{E}_{\alpha,\beta}[\mu_n^*(r_{\alpha,\beta,n}^*)]-\mathcal{E}_{\alpha,\beta}[\mu_n^\Delta]$
has the same sign as 
$$\left(\frac{2n^2}{n+1}\right)^{\alpha-\beta}-\frac{(2n-2+2^{\beta/2})^{\alpha}}{(2n-2+2^{\alpha/2})^{\beta}}.$$
Multiplying through by $(2n-2+2^{\alpha/2})^\beta\left(\frac{n+1}{2n^2}\right)^{\alpha}$ and raising both sides to the power of $\frac{1}{\alpha\beta},$ we conclude that $\mathcal{E}_{\alpha,\beta}[\mu_n^*(r_{\alpha,\beta,n}^*)]-\mathcal{E}_{\alpha,\beta}[\mu_n^\Delta]$ has the same sign as 
$$\left(\frac{(n+1)(2n-2+2^{\alpha/2})}{2n^2}\right)^{1/\alpha}-\left(\frac{(n+1)(2n-2+2^{\beta/2})}{2n^2}\right)^{1/\beta}=-(\varphi_n(\alpha)-\varphi_n(\beta)).$$
\end{proof}

\begin{proof}[Proof of Proposition \ref{prop - new threshold bound}]
    Let $\alpha\in(\beta,\underline{\alpha}_{\Delta^n}^*(\beta)).$ By unimodality of $\varphi_n,$ $\varphi_n(\alpha)-\varphi_n(\beta)>0.$ Thus, by Lemma \ref{lem - opp sign}, $\mathcal{E}_{\alpha,\beta}[\mu_n^*(r_{\alpha,\beta,n}^*)]-\mathcal{E}_{\alpha,\beta}[\mu_n^\Delta]<0,$ so that the optimal cross-polytope has strictly lower energy than the unit simplex. Thus, by the definition of $\alpha_{\Delta^n}$ in \cite[Theorem 1.5]{DLMSimplex}, $\alpha<\alpha_{\Delta^n}(\beta),$ as desired. 
\end{proof}

We now turn our attention to providing some non-rigorous evidence which suggests that $\underline{\alpha}_{\Delta^n}^*$ is a better family of bounds than $\underline{\alpha}_{\Delta^n}.$
\begin{example}
    Let $n=2$ and $\beta=2.5.$ Then we can check numerically that $\underline{\alpha}^*_{\Delta^2}(2.5)\approx 3.18$, but $\underline{\alpha}_{\Delta^2}(2.5)\approx 3.07.$
\end{example}

\begin{figure}[H]
    \centering
    \includegraphics[scale=0.25]{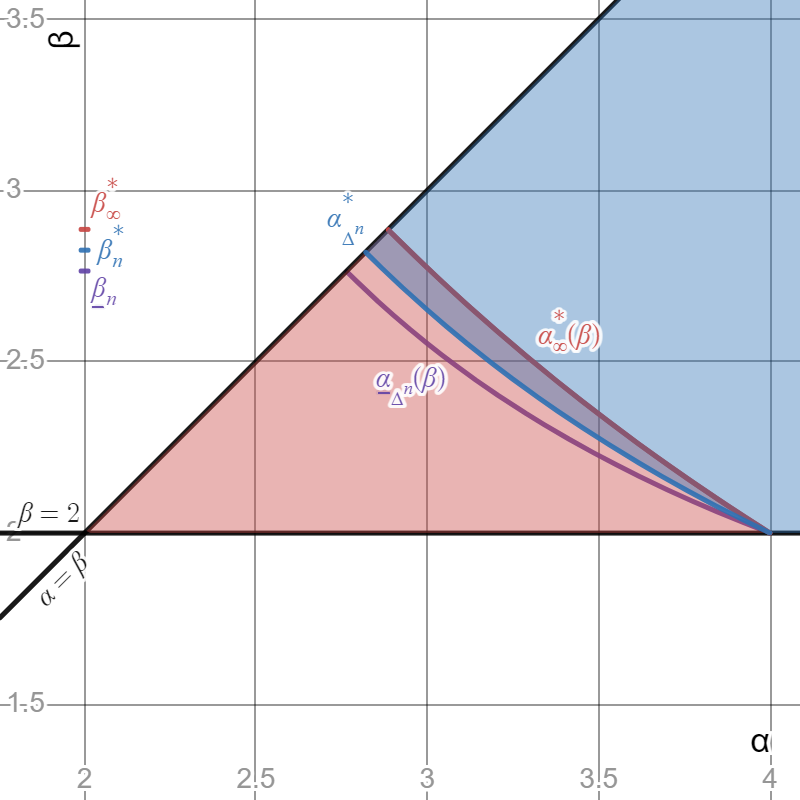}
    \caption{A comparison of $\underline{\alpha}_{\Delta^2}^*$ and $\underline{\alpha}_{\Delta^2}$ appears to indicate that $\underline{\alpha}_{\Delta^2}^*\ge \underline{\alpha}_{\Delta^2}$ for all $\beta> 2$ with $\alpha_{\Delta^2}^*(\beta)>\beta$. This improvement allows us to expand the area in parameter space where unit simplices are known not to minimize $\mathcal{E}_{\alpha,\beta},$ which is represented in this figure as the red region to the left of the graph of $\underline{\alpha}_{\Delta^n}.$ Numerical experiments for higher dimensions $(n\ge 3)$ also showed similar results, which leads us to believe that, for any $n\ge 2,$ $\underline{\alpha}_{\Delta^n}^*\ge \underline{\alpha}_{\Delta^n}$ for all $\beta> 2$ with $\alpha_{\Delta^n}^*(\beta)>\beta.$}
    \label{fig:enter-label}
\end{figure}
These pieces of evidence lead us to the following two conjectures, which we have not been able to prove. We begin with the weaker conjecture, which is also well-supported by numerical evidence first. 
\begin{conjecture}[Superiority of $\underline{\alpha}_{\Delta^n}^*$]
    Let $n\ge 2.$ Then $\underline{\alpha}_{\Delta^n}^*(\beta)\ge \underline{\alpha}_{\Delta^n}(\beta)$ for any $\beta\ge 2,$ with equality if and only if $\beta=2$ or $\alpha_{\Delta^n}^*(\beta)=\beta.$
\end{conjecture}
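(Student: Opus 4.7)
The plan is to reformulate the conjecture as a comparison of the level set structures of the two unimodal functions $f_n$ and $\varphi_n$, and then to attempt either a direct analytic comparison or a construction of an interpolating family of competitors connecting the Euler--Lagrange infinitesimal perturbation to the cross-polytope.

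First, a routine computation establishes that both functions attain equal values at the endpoints of $[2,4]$, namely $f_n(2)=f_n(4)=0$ and $\varphi_n(2)=\varphi_n(4)=-\sqrt{(n+1)/n}$, and both are unimodal on this interval (unimodality of $\varphi_n$ is Lemma \ref{lem - unimodality of phi_n}, and an analogous analysis applies to $f_n$). Consequently $\underline{\alpha}_{\Delta^n}(2)=\underline{\alpha}_{\Delta^n}^*(2)=4$, and for $\beta$ in the appropriate interior subinterval, both bounds are characterized as the ``reflections'' of $\beta$ through the respective unimodal peaks. The conjecture thus asserts that the peak of $\varphi_n$ is wider than that of $f_n$ in a precise level-set sense, with equality occurring exactly at the endpoints of the comparison.

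My next step would be to verify the infinitesimal version of the conjecture near $\beta=2$ using the implicit function theorem. Expanding $f_n(\alpha)=f_n(\beta)$ around $(\beta,\alpha)=(2,4)$ yields $4-\underline{\alpha}_{\Delta^n}(\beta)\approx |f_n'(2)/f_n'(4)|(\beta-2)$, with the analogous formula for $\underline{\alpha}_{\Delta^n}^*$ in terms of $\varphi_n$. A direct computation (which I have verified in the case $n=2$) shows the strict inequality $|\varphi_n'(2)/\varphi_n'(4)|<|f_n'(2)/f_n'(4)|$, confirming the conjecture near $\beta=2$. The task is then to promote this infinitesimal inequality to a global statement across the full range of $\beta$ where both bounds are nontrivial.

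For the global statement, my preferred strategy is to construct a continuous one-parameter family of measures $\{\mu_t\}_{t\in[0,1]}$ interpolating between an Euler--Lagrange competitor $\mu_0$ (the simplex with a small mass displaced toward $-v_0$) and the optimal cross-polytope $\mu_1=\mu_n^*(r^*_{\alpha,\beta,n})$, and then to show that the set of parameters $(\alpha,\beta)$ for which $\mathcal{E}_{\alpha,\beta}[\mu_t]<\mathcal{E}_{\alpha,\beta}[\mu_n^\Delta]$ is monotone non-decreasing in $t$. Since $\mu_0$'s favorable region is (to leading order) $\{\alpha<\underline{\alpha}_{\Delta^n}(\beta)\}$ and $\mu_1$'s is exactly $\{\alpha<\underline{\alpha}_{\Delta^n}^*(\beta)\}$, monotonicity would imply the conjecture, and strict monotonicity for $t\in(0,1)$ would yield the equality characterization. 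The hard part will be designing this interpolation: the simplex ($n+1$ vertices) and the cross-polytope ($2n$ vertices) have different cardinalities and geometric structures, so natural continuous deformations between them are not obvious. An alternative I would attempt in parallel is to bypass the interpolation entirely and establish the direct algebraic comparison that $f_n(\alpha)=f_n(\beta)$ with $\alpha$ past the peak of $\varphi_n$ forces $\varphi_n(\alpha)\le \varphi_n(\beta)$, after a uniform substitution involving the three exponentials $2^{\gamma/2}$, $(2n/(n+1))^{\gamma/2}$, and $((n-1)/(n+1))^{\gamma/2}$. The different analytic forms of $f_n$ (a linear combination of exponentials divided by $\gamma$) and $\varphi_n$ (a $\gamma$-th root of a weighted sum of exponentials) make such a direct comparison technically delicate, and I expect this to be the main obstacle regardless of the approach chosen.
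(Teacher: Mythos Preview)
The statement you are attempting to prove is a \emph{conjecture} in the paper, not a theorem: the author explicitly writes ``These pieces of evidence lead us to the following two conjectures, which we have not been able to prove,'' and offers only numerical evidence (an example at $n=2$, $\beta=2.5$ and a figure). There is therefore no proof in the paper to compare your proposal against.

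That said, your proposal is a reasonable outline of a research programme rather than a proof. Your endpoint computations are correct: one checks directly that $f_n(2)=f_n(4)=0$ and $\varphi_n(2)=\varphi_n(4)=-\sqrt{(n+1)/n}$, so both bounds agree at $\beta=2$ with value $4$, and the conjecture does reduce to a level-set comparison between the two unimodal functions. The infinitesimal analysis near $\beta=2$ via the implicit function theorem is a sound first step. However, you are candid that the two globalization strategies you propose are both speculative: the interpolating family $\{\mu_t\}$ between an Euler--Lagrange perturbation and the cross-polytope is not constructed, and you give no argument for the claimed monotonicity of its favorable region in $t$; the direct algebraic comparison between $f_n$ and $\varphi_n$ is likewise left as an identified obstacle rather than resolved. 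So the proposal correctly frames the problem and verifies the boundary case, but does not close the gap the paper leaves open.
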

We conclude with the following stronger conjecture, which states that $\underline{\alpha}_{\Delta^n}^*$ is actually the threshold function:
\begin{conjecture}[Sharpnessof $\underline{\alpha}_{\Delta^n}^*$]
    Let $n\ge 2$ and $\beta\ge 2.$ Then $\underline{\alpha}_{\Delta^n}^*(\beta)=\alpha_{\Delta^n}(\beta).$
\end{conjecture}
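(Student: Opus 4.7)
The plan is to prove equality by combining Proposition~\ref{prop - new threshold bound}, which already yields $\underline{\alpha}_{\Delta^n}^*(\beta)\le \alpha_{\Delta^n}(\beta)$, with the reverse inequality $\alpha_{\Delta^n}(\beta)\le \underline{\alpha}_{\Delta^n}^*(\beta)$. The latter amounts to the claim that, whenever $\beta\ge 2$ and $\alpha>\underline{\alpha}_{\Delta^n}^*(\beta)$, the unit simplex is a global minimizer of $\mathcal{E}_{\alpha,\beta}$; equivalently, no probability measure strictly beats the unit simplex in this parameter regime.

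My first step would be to reduce the problem to a finite-dimensional optimization. For $\alpha>\beta>2$, \cite[Theorem 1.1]{CFP} ensures every minimizer is supported on a finite set, and the diameter bound of \cite[Proposition 2.1]{DLMSphere} together with translation invariance confines us to a compact region in $\mathbb{R}^n$; for $\beta=2$ the minimizers are completely classified by \cite[Theorem 2.1(c)]{DLMSphere} and \cite[Corollary 2.3]{DLMSphere}, so that base case can be handled directly. For each $k\ge 2$ I would define
\[ g_k(\alpha,\beta):=\inf\bigl\{\mathcal{E}_{\alpha,\beta}[\mu]\ :\ |\operatorname{spt}(\mu)|\le k\bigr\}, \]
so that $E_\alpha(\beta)=\inf_{k\ge 2}g_k(\alpha,\beta)$. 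The simplex realizes $g_{n+1}$ while the optimal cross-polytope provides an upper bound for $g_{2n}$; the reverse inequality then reduces to establishing $g_k(\alpha,\beta)\ge g_{n+1}(\alpha,\beta)$ for every $k\ge 2$ and every $\alpha>\underline{\alpha}_{\Delta^n}^*(\beta)$.

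Two cases of this family of inequalities are reachable. The case $k=n+1$ requires excluding non-simplex $(n+1)$-point configurations, which is essentially the content of the simplex stability analysis in \cite{DLMSimplex}. The case $k=2n$ follows from Lemma~\ref{lem - opp sign} and the unimodality of $\varphi_n$ (Lemma~\ref{lem - unimodality of phi_n}) once one verifies that the optimal $2n$-point configuration genuinely is a cross-polytope; proving this would require an Euler-Lagrange / symmetry analysis analogous to \cite[Section 4.2]{DLMSimplex}, leveraging the high degree of symmetry of the cross-polytope together with the concavity of $E_\alpha(\beta)$ in $\beta$ (Proposition~\ref{prop - concave increasing}) to restrict the set of plausible optimizers.

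The main obstacle, and the reason the statement remains conjectural, is controlling $g_k$ for cardinalities $k\notin\{n+1,2n\}$: prisms, antiprisms, product configurations, and genuinely asymmetric configurations each generate their own threshold curve in parameter space, and one would need a uniform argument placing all of these to the left of the cross-polytope curve $\varphi_n(\alpha)=\varphi_n(\beta)$. A natural route is a symmetrization or rearrangement principle sending an arbitrary competitor to one of a short list of highly symmetric candidates without raising its energy, thereby reducing the question to finitely many explicit computations; but no such rearrangement is currently known for interaction energies with the mildly-repulsive singularity at the origin of the potential $W_{\alpha,\beta}$. The numerical experiments summarized in Figure~\ref{fig:enter-label} make the conjecture plausible across dimensions, yet converting that evidence into a rigorous bound valid uniformly in $k$ is precisely the gap that must be filled.
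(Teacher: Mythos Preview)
The statement you are addressing is a \emph{conjecture} in the paper, not a theorem: the paper offers no proof and explicitly presents it as open, supported only by the numerical evidence of Figure~\ref{fig:enter-label} and the single worked example at $n=2$, $\beta=2.5$. There is therefore no ``paper's own proof'' to compare your proposal against.

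Your proposal is not a proof either, and to your credit you say so. You correctly observe that Proposition~\ref{prop - new threshold bound} already gives one inequality, and that the reverse inequality amounts to showing the simplex is optimal for every $\alpha>\underline{\alpha}_{\Delta^n}^*(\beta)$. Your reduction to finitely supported competitors via \cite{CFP} is sound, and the obstacle you isolate --- controlling $g_k$ uniformly over all cardinalities $k$, not just $k\in\{n+1,2n\}$ --- is exactly the gap. I would add that even the two ``reachable'' cases you list are not actually closed by the results at hand: you yourself note that the $k=2n$ case requires first proving that the optimal $2n$-point configuration is the cross-polytope, which is not established anywhere in the paper or its references; and the $k=n+1$ case, as stated, would need the simplex to beat every $(n+1)$-point measure for all $\alpha>\underline{\alpha}_{\Delta^n}^*(\beta)$, which is strictly stronger than what the stability analysis in \cite{DLMSimplex} delivers. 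So your outline is an accurate map of the terrain, but none of its steps is complete, and the paper does not claim otherwise.
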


\begin{appendices}

\section{Unimodality of $\varphi_n$}\label{secA1}

We now include the postponed proof of Lemma \ref{lem - unimodality of phi_n}. To prove this lemma, it will be useful to recall the following properties of unimodal functions: 
\begin{lemma}[Relevant Properties of Unimodal Functions] \label{lem - uniprop}
Let $g:(a,b)\to (c,d)$ and $h:(c,d)\to\mathbb{R}$ be $C^1.$
\begin{itemize}
    \item If $g$ is strictly increasing, then $h$ is unimodal on $(c,d)$ if and only if $h\circ g$ is unimodal on $(a,b).$ In this case, if $h\circ g$ is maximized at $t_0\in (a,b),$ then $h$ is maximized at $g(t_0)\in (c,d).$
    \item If $h$ is strictly increasing on $(c,d),$ then $g$ is unimodal on $(a,b)$ if and only if $h\circ g$ is unimodal on $(a,b).$ In this case, $g$ and $h\circ g$ are both maximized at the same point $t_0.$
\end{itemize}

\end{lemma}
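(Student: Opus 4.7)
The plan is to reduce both bullets to a sign analysis of the derivative of the composition, after a preliminary reformulation of unimodality that is convenient for the chain rule.

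\textbf{Step 1: Reformulation.} I would first establish the following characterization: a $C^1$ function $f$ on an open interval $I$ is unimodal (in the paper's sense of a unique global maximum and no other critical points) if and only if there exists a unique $x_0\in I$ such that $f'(x)>0$ for all $x\in I$ with $x<x_0$, $f'(x_0)=0$, and $f'(x)<0$ for all $x\in I$ with $x>x_0$. The nontrivial direction follows from applying the intermediate value theorem to $f'$ on each of the two open subintervals $I\cap(-\infty,x_0)$ and $I\cap(x_0,\infty)$: on each of these $f'$ is nonvanishing by unimodality, hence of constant sign, with the signs forced by the fact that $x_0$ is a maximum. The converse is immediate, since under the sign pattern any other critical point would contradict constancy of sign, and $x_0$ is then the unique global maximizer by the first derivative test.

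\textbf{Step 2: First bullet.} With this reformulation, I would prove the first bullet by the chain rule computation
\begin{equation*}
(h\circ g)'(t)=h'(g(t))\,g'(t).
\end{equation*}
Interpreting the hypothesis ``$g$ strictly increasing and $C^1$'' as $g'>0$ on $(a,b)$ (equivalently, $g$ is a $C^1$ diffeomorphism onto its image, which is the setting in which the conclusion is meaningful, since a flat point of $g$ would introduce a spurious critical point in $h\circ g$), the sign of $(h\circ g)'(t)$ agrees with the sign of $h'(g(t))$. The image $g((a,b))$ is then an open subinterval of $(c,d)$, and the forward direction is: if $h$ is unimodal on $(c,d)$ with maximum at $s_0\in g((a,b))$, then $s_0=g(t_0)$ for a unique $t_0$, and the reformulation together with the sign matching shows $h\circ g$ is unimodal with maximizer $t_0$. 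The reverse direction reads the same identity in the opposite direction, using injectivity of $g$ to translate the critical point of $h\circ g$ into the unique critical point of $h$ on $g((a,b))$.

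\textbf{Step 3: Second bullet.} For the second bullet, the hypothesis $h$ strictly increasing is read as $h'>0$ on $(c,d)$, so the chain rule gives that the sign of $(h\circ g)'(t)$ now agrees with the sign of $g'(t)$. Consequently the critical points of $h\circ g$ coincide exactly with those of $g$, and the direction of the sign change across each such point is preserved. Applying the reformulation of Step~1 to both $g$ and $h\circ g$ then yields the equivalence of unimodality, with the same unique maximizer $t_0$.

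The only step requiring any care is the interpretive choice in Step~2: without reading strict monotonicity as $g'>0$ one can construct counterexamples in which $g$ has an interior flat point while $h$ is unimodal, yet $h\circ g$ acquires an extra critical point and ceases to be unimodal; the two bullets are consistent with each other only under the stronger reading, which is the one actually used in the application to $\varphi_n$.
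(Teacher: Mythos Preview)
Your proof is correct and follows essentially the same approach as the paper, which simply states that both facts are immediate consequences of the chain rule. Your version is considerably more detailed---including an explicit sign-pattern reformulation of unimodality and a careful remark on reading ``strictly increasing'' as $g'>0$---but the underlying idea is identical.
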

\begin{proof} Both these facts are immediate consequences of the chain rule. 
    
\end{proof}

\begin{proof}[Proof of Lemma \ref{lem - unimodality of phi_n}] 

 Define a strictly increasing function $\ell:(-\infty, 0)\to (-\infty,\infty)$ by $\ell(s)=-\frac{2\log(-x)}{\log 2}$ and, for each $n\ge 2,$ a strictly increasing function $\xi_n:\left(1+\frac{n-1}{2n^2},\infty\right)\to(0,\infty)$ by 
$$\xi_n(t)=\frac{2\log\left(\frac{2n^2}{n+1}t-(2n-2)\right)}{\log 2}.$$
By applying Lemma \ref{lem - uniprop} twice, $\varphi_n$ is unimodal on $(0,\infty)$ if and only if $\ell\circ \varphi_n\circ\xi_n$ is unimodal on $(1+\frac{n-1}{2n^2},\infty).$ After calculating, we see that  
$$\ell\circ \varphi_n\circ\xi_n(t)=-\frac{\log t}{\log\left(\frac{2n^2}{n+1}t-(2n-2)\right)},$$
so we will only need to check that this function is unimodal on $(1+\frac{n-1}{2n^2},\infty)$ in order to prove the lemma. 

Now, to check unimodality, we compute $(\ell\circ \varphi_n\circ\xi_n)'(t)$. In particular, we have that 
$$(\ell\circ \varphi_n\circ\xi_n)'(t)=-\frac{t^{-1}\log\left(\frac{2n^2}{n+1}t-(2n-2)\right)-\frac{2n^2}{n+1}\left(\frac{2n^2}{n+1}t-(2n-2)\right)^{-1}\log t}{\log^2\left(\frac{2n^2}{n+1}t-(2n-2)\right)}.$$
Multiplying both sides by the positive quantity $t\left(\frac{2n^2}{n+1}t-(2n-2)\right)\log^2\left(\frac{2n^2}{n+1}t-(2n-2)\right),$
we find that 
$(\ell\circ \varphi_n\circ \xi_n)'(t)$ has the same sign as 
$$g_n(t):=-\left(\frac{2n^2}{n+1}t-(2n-2)\right)\log\left(\frac{2n^2}{n+1}t-(2n-2)\right)+\frac{2n^2}{n+1}t\log t$$
on $(1+\frac{n-1}{2n^2},\infty).$ 

As such, to conclude unimodality, it will suffice to show that $g_n$ is concave on $(1+\frac{n-1}{2n^2},\infty)$ with $g_n(1+\frac{n-1}{2n^2})>0.$ For concavity, we compute derivatives. That is, 
\begin{align*}g_n'(t) & =-\frac{2n^2}{n+1}\log\left(\frac{2n^2}{n+1}t-(2n-2)\right)-\frac{2n^2}{n+1}\frac{\frac{2n^2}{n+1}t-(2n-2)}{\frac{2n^2}{n+1}t-(2n-2)}+\frac{2n^2}{n+1}\log t+\frac{2n^2}{n+1}\\
& = -\frac{2n^2}{n+1}\log\left(\frac{2n^2}{n+1}t-(2n-2)\right)+\frac{2n^2}{n+1}\log t.
\end{align*}
and 
\begin{align*}g_n''(t) & =-\left(\frac{2n^2}{n+1}\right)^2\left(\frac{2n^2}{n+1}t-(2n-2)\right)^{-1}+\frac{2n^2}{n+1}t^{-1}\\
& = -\frac{\frac{2n^2}{n+1}(2n-2)}{t\left(\frac{2n^2}{n+1}t-(2n-2)\right)},\end{align*}
which allows us to readily note that $g_n$ is concave on the desired interval, as $\frac{2n^2}{n+1}$, $2n-2,$ $t$, and $\frac{2n^2}{n+1}t-(2n-2)$ are all positive due to our choice of interval and $n$. Finally, we compute
\begin{align*}g_n\left(1+\frac{n-1}{2n^2}\right) &  = (2n-1)\log\left(1+\frac{n-1}{2n^2}\right),\end{align*}
which is positive since $2n-1$ and $\frac{n-1}{2n^2}$ are both positive.

To see where $\varphi_n$ is maximized, we use basic calculus to note that the unique maximum $t_0$ of $\ell\circ \varphi_n\circ\xi_n$ on $(1+\frac{n-1}{2n^2},\infty)$ is defined by the relation $g_n(t)=0,$ or rather,
$$\left(\frac{2n^2}{n+1}t_0-(2n-2)\right)\log\left(\frac{2n^2}{n+1}t_0-(2n-2)\right)=\frac{2n^2}{n+1}t_0\log t_0.$$
Applying Lemma \ref{lem - uniprop} twice, we see that $\varphi_n$ is uniquely maximized at 
$$\xi_n(t_0)=\frac{2\log\left(\frac{2n^2}{n+1}t_0-(2n-2)\right)}{\log 2}.$$

\end{proof}

\end{appendices}

\bibliography{AShanks-ref}

\end{document}